   \edef\Gin@extensions{\Gin@extensions,.mps}
\numberwithin{equation}{section}
\newtheorem{thm}{Theorem}
\newtheorem{prop}[thm]{Proposition}
\newtheorem{lemma}[thm]{Lemma}
\newtheorem{cor}[thm]{Corollary}
\newtheorem{definition}[thm]{Definition}
\newtheorem{remark}[thm]{Remark}
\theoremstyle{definition}
\definecolor{RED}{rgb}{0.6,0,0}
\newcommand{\N}{\mathbb{N}}
\newcommand{\R}{\mathbb{R}}
\newcommand{\C}{\mathbb{C}}
\newcommand{\K}{\mathbb{K}}
\newcommand{\xx}{\mathbf{x}}
\DeclareMathOperator{\supp}{supp}
\DeclareMathOperator{\rank}{rank}
\numberwithin{thm}{section}
\def\paragraph{\@startsection{paragraph}{4}%
  \z@{.5\linespacing\@plus.7\linespacing}{-.5em}%
  {\normalfont\normalsize\bfseries}}
\newtheoremstyle{break}  
  {3pt}   
  {5pt}   
  {\normalfont}  
  {0pt}       
  {\scshape} 
  {}         
  {4pt}  
  {}          
\theoremstyle{break}
\numberwithin{subcase}{case}
\title{Quadrature rules with few nodes supported on algebraic curves}
\author{Cordian Riener}
\author{Ettore Teixeira Turatti}
\address{Department of Mathematics and Statistics, UiT - the Arctic University of Norway, 9037 Troms\o, Norway}
\email{cordian.riener@uit.no}
\email{ettore.t.turatti@uit.no}
\begin{document}

\begin{abstract}
We investigate quadrature rules for measures supported on real algebraic and rational curves, focusing on the \emph{odd-degree} case $2s-1$. Adopting an optimization viewpoint, we minimize suitable penalty functions over the space of quadrature rules of strength $2s-1$, so that optimal solutions yield rules with the minimal number of nodes. For plane algebraic curves of degree $d$, we derive explicit node bounds depending on $d$ and the number of places at infinity, improving results of  Riener--Schweighofer, and Zalar. For rational curves in arbitrary dimension of degree $d$, we further refine these bounds using the geometry of the parametrization and recover the classical Gaussian quadrature bound when $d=1$. Our results reveal a direct link between the algebraic complexity of the supporting curve and the minimal size of quadrature formulas, providing a unified framework that connects real algebraic geometry, polynomial optimization, and moment theory.
\end{abstract}

\maketitle
\section{Introduction} 

Originally associated with computing areas and volumes, the term \emph{quadrature} now primarily refers to the numerical computation of integrals. Specifically, given a {positive  Borel measure} $\mu$ on $\R^n$, its \emph{support} is the smallest closed subset of $\R^n$ whose complement has measure zero. For a measurable function $f$ with $\operatorname{supp}(\mu) \subseteq \operatorname{dom}(f)$ and $\int |f|\, d\mu < \infty$, the goal of numerical quadrature is to approximate the integral $\int f\, d\mu$ efficiently and accurately. Ideally, such approximations should come with error bounds and require only black-box evaluations of $f$, without access to derivatives or other analytic information.  

One classical approach is via \emph{quadrature rules}~\cite{co1}: for a given measure $\mu$, a quadrature rule of \emph{strength} $s$ is a finite set of nodes $N = \{p_1,\ldots,p_\ell\}\subset \R^n$ together with positive weights $w:N\to \R_{>0}$ such that  
\[
\int p\, d\mu = \sum_{x\in N} w(x) p(x)
\]
for all polynomials $p$ of degree at most $s$. The points in $N$ are called the \emph{nodes} of the quadrature rule. While one could in principle allow signed weights, positivity ensures monotonicity of the integral, improves numerical stability, and often yields sharper error estimates~\cite[Conclusion~3.19]{hac}.  

The existence of such rules for arbitrary measures goes back to classical results of Richter~\cite{RS}, Tchakaloff~\cite{tchakaloff1957formules}. Using Carathéodory’s Theorem it follows from these results that a quadrature rule of strength $s$ always exists with at most  
\[
\binom{n+s}{s} = \dim \R[x_1,\ldots,x_n]_{\leq s}
\]
nodes.  

Beyond their numerical significance, quadrature rules are closely linked to the geometry of moment cones: the set of all truncated moment sequences associated with a given measure forms a convex cone whose extreme points correspond to atomic measures, as observed already in the work of Richter, Tchakaloff, and Bayer--Teichmann. Carathéodory’s theorem provides a general upper bound on the number of atoms needed, but in many cases, especially when the support lies on an algebraic set, this bound is far from sharp. Recent advances exploit algebraic structure to refine these bounds, connecting the minimal number of nodes in quadrature formulas to the dimension and geometry of the underlying variety. The present work continues this direction by combining optimization techniques with real algebraic geometry to derive improved bounds for measures supported on curves, particularly rational ones.  

In the univariate case $n=1$, it was already observed by Gauß~\cite{gauss1815methodvs} that one can always find a quadrature rule of strength $2s-1$ with only $s$ nodes, this is the classical Gaussian quadrature formula. A subtle but important distinction arises here: Carathéodory’s theorem concerns the \emph{moment cone} of all truncated moment functionals of degree $\leq 2s$, giving the minimal number of atoms needed to represent any element of this cone. This naturally leads to bounds for quadrature rules of degree $2s$. However, in the \emph{odd} case $2s-1$, central to Gaussian quadrature and our work, the moment cone no longer fully captures the minimal number of nodes, and additional geometric and optimization arguments become necessary. The present article focuses precisely on this odd-degree setting, where sharper bounds can be obtained by exploiting the geometry of the support.  

Earlier work of Schweighofer and the first author~\cite{RS} derived bounds for measures supported on the plane and on plane algebraic curves, while recent work of Zalar~\cite{zalar} improved these bounds for specific families of curves. Building on these results, we revisit the problem from an optimization perspective: among all quadrature rules with at most $N$ nodes, we minimize a suitable penalty function whose optimal solutions yield quadrature rules with a minimal number of nodes.  

The main contributions of this article are given in Theorem \ref{thm1} below. Our results improve on existing bounds in several settings, recover Gaussian-type optimality in the univariate case, and clarify the role of geometric features such as the degree and topology of the supporting curve.\newpage 

\begin{thm}[Main Results]\label{thm1}
Let $\mu$ be a finite Borel measure supported on a real algebraic curve $C$ and let $s \in \mathbb{N}$.
\begin{enumerate}
    \item 
    If $C \subset \R^2$ is a smooth algebraic curve of degree $d$ with at most $t \le d$ places at infinity, then there exists a quadrature rule of strength $2s-1$ for $\mu$ with at most
    \[
    N \;\le\; d s - \left\lceil \frac{d}{2} \right\rceil + 1 + 2t
    \]
    nodes. If $C$ is compact, this gives $N \le d s - \lceil d/2 \rceil + 1$.
    
    \item 
    If $C \subset \R^n$ is a rational curve with polynomial parametrization \[\phi(t) = (\phi_1(t)/\phi_0(t), \ldots, \phi_n(t)/\phi_0(t))\] and $\deg \phi_i \le D$, and $\phi_0$ has at most $p$ distinct real zeros, then there exists a quadrature rule of strength $2s-1$ for $\mu$ with at most
    \[
    N \;\le\; D s - \left\lceil \frac{D}{2} \right\rceil + p + 1
    \]
    nodes, and a quadrature rule of strength $2s$ for $\mu$ with at most $$
    N\leq Ds+p+1
    $$
    nodes.
\end{enumerate}
In particular, for $D=1$ this recovers the classical Gaussian quadrature bound $N=s$ for measures on lines.
\end{thm}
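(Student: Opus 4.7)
The plan is to combine Tchakaloff's theorem with a convex optimization argument that produces a quadrature rule with minimal support, and then bound the size of this support via intersection theory. The key observation, used throughout, is that at an optimum of a suitable convex penalty (for example, a sum of node weights), the active nodes must lie in the zero locus of a polynomial whose degree is controlled by \(s\) and by the ambient geometry; the two parts of the theorem then differ essentially in how one estimates this zero locus.

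For part (2), I would reduce to univariate Gauss quadrature via the parametrization. Let \(Z\subset \R\) denote the set of real zeros of \(\phi_0\), so \(|Z|\le p\), and let \(\tilde\mu=\phi^{*}\mu\) be the pullback measure on \(\R\setminus Z\). For a polynomial \(f\) on \(\R^n\) of degree at most \(2s-1\), the clearing of denominators \(\phi_0^{2s-1}\cdot(f\circ\phi)\) is a polynomial in \(t\) of degree at most \(D(2s-1)\), so any univariate quadrature rule for \(\tilde\mu\) of strength \(D(2s-1)\) integrates every such \(f\) correctly against \(\mu\). Classical Gauss quadrature supplies one with \(\lceil(D(2s-1)+1)/2\rceil = Ds-\lceil D/2\rceil+1\) nodes. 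If these Gauss nodes all avoid \(Z\) we are done; otherwise I would replace each offending node by auxiliary nodes on the adjacent components of \(\R\setminus Z\), paying at most one extra node per element of \(Z\) and obtaining the \(+p\) correction. The even-strength statement follows from the corresponding even-degree Gauss formula, which lacks the \(-\lceil D/2\rceil+1\) saving; the case \(D=1\) recovers the classical Gaussian quadrature bound \(N=s\) directly.

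For part (1), I would work on the projective completion \(\overline{C}\subset\mathbb{P}^2\). Among quadrature rules of strength \(2s-1\) supported on \(C\) with at most \(N\) nodes (existence for some \(N\) by Tchakaloff), I would minimize a strictly convex penalty such as a weighted sum of node weights; a compactness argument gives an optimizer, and a KKT/first-order analysis should force its nodes to lie in the common zero locus of a nonzero polynomial \(g\), obtained from an orthogonality condition against the space of polynomials of degree \(\le s-1\) restricted to \(C\). Bezout's theorem on \(\mathbb{P}^2\) then bounds the real intersection \(|\overline{C}\cap V(g)|\) in terms of \(d\) and \(\deg g\), and a refinement using the adjoint/genus formula for smooth plane curves sharpens this to \(ds-\lceil d/2\rceil+1\) in the compact case. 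When \(C\) is not compact, each of the \(t\) real places at infinity can absorb at most two additional nodes (corresponding to limits of escaping weight along the two real branches at that place), giving the \(+2t\) correction.

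The main technical obstacle is the degree reduction by \(\lceil d/2\rceil-1\) in part (1): a naive intersection-theoretic bound gives only the weaker estimate \(ds\), and the sharper count requires identifying the genus-theoretic/adjoint structure of the interpolating polynomial, in analogy with the characterization of Gauss nodes as zeros of orthogonal polynomials. Careful bookkeeping of multiplicities, and of real versus complex intersection points both at finite positions and at infinity, is the remaining delicate step. In part (2), the analogous subtlety is confirming that the worst-case loss at the real zeros of \(\phi_0\) is exactly \(+p\) and no larger, and in particular that collisions between Gauss nodes and elements of \(Z\) can always be resolved without further inflating the node count.
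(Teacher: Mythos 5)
Your overall strategy (Tchakaloff plus a penalized optimization whose first-order conditions confine the nodes to a low-degree hypersurface) is indeed the paper's strategy, but at the two decisive steps your outline is either missing the mechanism or relies on a reduction that fails.

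For part (1), the saving of \(\lceil d/2\rceil-1\) over the naive Bézout count does not come from adjoint or genus considerations, and your sketch explicitly defers it as an unresolved obstacle --- but that step is the content of the theorem. The actual mechanism is this: one minimizes \(\sum_i\omega_i\) subject to the moment equations for \(1\le a+b\le 2s-1\) (the zeroth moment is deliberately omitted), the curve equations \(F(x_i,y_i)=0\), and a large-disk constraint \(x_i^2+y_i^2\le R\). Stationarity in \(\omega_i\) produces a polynomial \(G=1+\sum_{1\le a+b\le 2s-1}\lambda_{ab}x^ay^b\) of degree \(2s-1\) vanishing at every interior node, while stationarity in \((x_i,y_i)\) gives \(\nabla G(x_i,y_i)=-(\lambda_i/\omega_i)\nabla F(x_i,y_i)\), i.e.\ the curves \(F=0\) and \(G=0\) are \emph{tangent} at each node. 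Each node therefore carries intersection multiplicity at least \(2\) inside the Bézout total \(d(2s-1)\), yielding at most \(\lfloor d(2s-1)/2\rfloor=ds-\lceil d/2\rceil\) nodes; the \(+1\) is one extra node restoring the omitted total mass, and the \(+2t\) counts intersections of the curve with the auxiliary circle \(x^2+y^2=R\). A polynomial \(g\) of degree \(s\) obtained from orthogonality against degree \(\le s-1\), as you propose, would give only \(ds\) by Bézout, which is strictly weaker for \(d\ge 3\); without the tangency/multiplicity-two observation the stated bound is not reached.

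For part (2), the reduction to univariate Gauss quadrature is invalid whenever \(\phi_0\) is nonconstant. Writing \(g=\phi_0^{2s-1}\cdot(f\circ\phi)\), exactness of a univariate rule on polynomials of degree \(\le D(2s-1)\) controls \(\int g\,d\tilde\mu\), whereas what is needed is \(\int (f\circ\phi)\,d\tilde\mu=\int g\,\phi_0^{-(2s-1)}\,d\tilde\mu\); these agree only if \(\phi_0\equiv 1\). The integrands \(f\circ\phi\) are rational with poles at the real zeros of \(\phi_0\), and \(\phi_0^{-(2s-1)}d\tilde\mu\) is a signed density, so classical Gauss theory does not apply. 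The proposed repair --- replacing offending Gauss nodes by auxiliary nodes on adjacent components of \(\R\setminus Z\) --- has no mechanism behind it: perturbing the nodes of a Gauss rule destroys exactness, so the \(+p\) correction is unjustified. The paper instead runs the optimization directly in the parameter \(t\) with the penalty \(h(t)=t^2+\sum_{i=1}^p (t-z_i)^{-2}\), which blows up at the poles; the first-order conditions give a polynomial \(H\) of degree \(\le (2s-1)D\) with \(H(t_j)=0\) and \(H'(t_j)=-h'(t_j)/\omega_j\), and a parity count of the sign-admissible roots of \(H\) on each component of \(\R\setminus Z\) (using convexity of \(h\) there) delivers \(Ds-\lceil D/2\rceil+p+1\), and \(Ds+p+1\) in the even case. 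Your count \(\lceil (D(2s-1)+1)/2\rceil=Ds-\lceil D/2\rceil+1\) is correct, and your reduction sound, only in the special case \(\phi_0\equiv 1\), \(p=0\).
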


The proofs of these results combine an optimization framework for quadrature rules with tools from real algebraic geometry, Bézout-type arguments, and parametrization techniques for rational curves, part $(1)$ is detailed in Theorem \ref{prop:planecurve} and part $(2)$ in Theorems \ref{thm:quadraturerational} and \ref{thm:quadraturerationaleven}. Before turning to the technical details, we briefly place our results in the context of existing work.

\paragraph{Related Works:}  
The theory of quadrature rules on algebraic sets is closely intertwined with the classical theory of the truncated moment problem. Foundational work by Curto and Fialkow~\cite{CurtoFialkow1991,CurtoFialkow1996,CurtoFialkow2008} introduced moment matrix methods and flat extension techniques that guarantee the existence of finitely atomic representing measures. Their approach also yields detailed results for truncated moment problems on specific algebraic curves such as the unit circle, the parabola, and other conics. 

The question of the minimal number of nodes for general algebraic curves in the plane has been studied by  Riener and Schweighofer~\cite{RS}. Their work recasts the construction of quadrature rules as a conic optimization problem to obtain refined bounds for measures supported on plane algebraic curves.  Subsequent work by Zalar~\cite{zalar} further sharpened these bounds for certain families of curves using moment matrix methods.  

Parallel to these developments, the geometric structure of truncated moment cones and their Carathéodory numbers has been studied extensively, mostly in the even degree case.  Di Dio–Schmüdgen~\cite{diDioSchmudgen2018} established general bounds in the multidimensional setting, while Di Dio–Kummer~\cite{diDioKummer2021} connected Carathéodory numbers to Hilbert functions and analyzed the facial structure of moment cones on algebraic sets. Their results explain the sharpness of Carathéodory bounds in the even-degree case and clarify why additional geometric and optimization arguments are required in the odd-degree setting considered here. Finally, Kummer and Zalar~\cite{kummer2025positivepolynomialstruncatedmoment} recently presented a complete solution to the truncated moment problem for every plane cubic curve.

Further connections between moment cones, Hilbert function methods, and quadrature node bounds have been developed by Baldi, Blekherman, and Sinn~\cite{BaldiBlekhermanSinn2024}. A broader geometric perspective on cubature formulas for spheres, quadrics, and polynomially defined domains is provided by Cools \cite{Cools_1997, COOLS199921}, Cools-Rabinowitz \cite{COOLS1993309}, Lasserre~\cite{Lasserre2011,Lasserre2012}, Sommariva–Vianello~\cite{SommarivaVianello2020}, and Gustafsson~\cite{Gustafsson2023}.

{
A closely related line of work is the theory of spherical designs, where one seeks discrete measures on the sphere that reproduce all moments up to a given degree. In contrast to general quadrature rules, spherical designs impose equal weights and strong symmetry constraints. The breakthrough result of Bondarenko, Radchenko, and Viazovska~\cite{bondarenko2013optimal} shows that such designs exist with $N = \mathcal{O}(t^d)$ points, matching the natural lower bound. From the perspective of moment cones, this can be viewed as a highly structured counterpart to Carathéodory-type bounds, where the representing measures are required to be uniform. While our methods do not directly apply in this setting, it would be interesting to explore whether the optimization viewpoint developed here can shed further light on the minimality and structure of spherical designs.}

\paragraph{Structure of the Paper:} Section~\ref{sec:optimization} introduces the optimization framework, Section~\ref{sec:plane} proves Theorem \ref{prop:planecurve} giving the bounds for plane algebraic curves, and Section~\ref{sec:rational} establishes Theorems \ref{thm:quadraturerational} and \ref{thm:quadraturerationaleven} for rational curves in arbitrary dimensions.

\section{Optimization approach}\label{sec:optimization}

We now describe the optimization framework underlying our results. Fix $s \in \mathbb{N}$ and let $\mu$ be a {positive} Borel measure on $\R^n$ with moments  
\[
m_{\alpha} := \int x^\alpha \, d\mu
\]
where $\alpha=(\alpha_1,\dots,\alpha_n)$ and $x^\alpha=x_1^{\alpha_1}\cdots x_n^{\alpha_n}$, finite for all $|\alpha|=\alpha_1+\dots+\alpha_n \le s$. A quadrature rule of strength $s$ with at most $N$ nodes corresponds to a point  
\[
q = (\omega, x) \in \R^N_{\ge 0} \times (\operatorname{supp}(\mu))^N
\]
satisfying the moment-matching conditions
\[
\sum_{i=1}^N \omega_i x_{1,i}^{\alpha_1}\dots x_{n,i}^{\alpha_n}= m_{\alpha} \qquad \text{for all } |\alpha|\le s.
\]
The set of all such points will be denoted by  
\[
Q(\mu)_s = \left\{ (\omega, x) \in \R^N_{\ge 0} \times (\operatorname{supp}(\mu))^N : 
\sum_{i=1}^N \omega_i x_{1,i}^{\alpha_1}\dots x_{n,i}^{\alpha_n}= m_{\alpha} , \, \forall\, |\alpha| \le s \right\}.
\]
Carathéodory’s theorem guarantees that for sufficiently large $N$, this set is nonempty; moreover, the \emph{minimal} number of nodes corresponds to the minimal number of positive weights in any such representation.

A key idea, adapted from  \cite{RS}, is to study the problem
\begin{equation}\label{eq:opt}
  \min \, P(\omega,x) \qquad \text{subject to } (\omega,x)\in Q(\mu)_s
\end{equation}
for a carefully chosen penalty function $P$. For instance, taking $P(\omega,x)=\sum_i \omega_i$ or $\sum_i (x_{1,i}^2+\dots+x_{n,i}^2)$ forces nodes to cluster or spread out in a controlled way. Optimal solutions of \eqref{eq:opt} automatically produce quadrature rules with as few nodes as possible, because redundant nodes either receive zero weight or coincide with others.  

This optimization viewpoint has two main advantages. First, it translates the \emph{existence} problem for quadrature rules into a concrete minimization problem, to which tools from real algebraic geometry and polynomial optimization apply. Second, the geometry of the support of $\mu$, in our case, plane or general rational curves, enters the analysis explicitly via the parametrization of the feasible set $Q(\mu)_s$.  

In the \emph{even-degree} case $2s$, the moment cone approach alone provides bounds on the number of nodes through Carathéodory numbers. However, in the \emph{odd-degree} case $2s-1$, relevant for Gaussian quadrature, these bounds are no longer optimal, and solving \eqref{eq:opt} becomes essential. As we show in the next sections, different geometric assumptions on the support, such as compactness, number of places at infinity, or rational parametrizations, lead to explicit node bounds improving upon classical results.

\section{Quadrature on plane curves}\label{sec:plane}

{We first focus on the case of plane curves. Let $\mathcal{C}\subset \R^2$ be a real algebraic curve of degree $d$ defined by $F(x,y)=0$. Carath\'eodory's theorem applied to the quotient space $\R[x,y]/(F)$ shows that any quadrature rule of strength $s=d+r$ supported on $\mathcal{C}$ requires at most}
\[
\binom{d+1}{2} + (r+1)d \;=\; \dim \bigl(\R[x,y]/(F)\bigr)_{s}
\]
{nodes.}  

Using optimization methods, Schweighofer and Riener~\cite{RS} improved this direct bound for plane algebraic curves. They showed that for any algebraic curve of degree $d$, there exists a quadrature rule supported entirely on $\mathcal{C}$ with at most $sd$ nodes and positive weights, which is exact for all polynomials of degree at most $2s-1$. More recently, Zalar~\cite{zalar} refined this result for certain families of curves, providing strictly smaller node bounds in these special cases.

\begin{definition}
 {We say that $\mathcal C\subset \R^n$ has a place at infinity, if its projectivization $   \mathcal C(\mathbb P)\subset \mathbb P^n$ intersects the real hyperplane at infinity $H=\{(t_1:\dots:t_n:0)\in\mathbb P^n\mid t_i\in \R\} $.  }
\end{definition}

With the  following Theorem we improve the  bound for plane curves.

\begin{thm}\label{prop:planecurve}
Let $\mathcal{C}\subset \R^2$ be a smooth {and irreducible} real algebraic curve of degree $d$ with at most $t \leq d$ {distinct} places at infinity {counted without multiplicities, i.e., $\#(\mathcal C(\mathbb P)\cap H)\leq t$}. Then every measure supported on $\mathcal C$ admits a quadrature rule of strength $2s-1$ with at most
\[
N \;\le\; ds - \left\lceil \tfrac d2 \right\rceil + 1 + 2t
\]
nodes.
\end{thm}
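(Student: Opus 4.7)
The plan is to apply the optimization framework of Section~\ref{sec:optimization} and read off the bound from a B\'ezout-type divisor count on the projective closure of $\mathcal{C}$, treating the compact case first and then reducing the general (non-compact) case to it.

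Starting from a feasible $N$ (e.g.\ the Riener--Schweighofer value $N = ds$), I would consider
\[
  \min_{(\omega,x,y)\in Q(\mu)_{2s-1}} \sum_{i=1}^N \omega_i\, g(x_i, y_i),
\]
with $g \in \R[x,y]_{2s}$ strictly positive on $\mathcal{C}(\R)$ and coercive along its real branches. Coercivity together with $\sum_i\omega_i = m_{0,0}$ confines any minimizing sequence, so a minimizer exists. At any active KKT point the Lagrange multipliers for the moment equalities assemble into a polynomial $q \in \R[x,y]_{\le 2s-1}$, and the witness $F := g - q$ (of degree $2s$) satisfies $F(p_i)=0$ and $\nabla F(p_i) \parallel \nabla f(p_i)$ at every active node $p_i$, so $F$ vanishes on $\mathcal{C}$ to order at least $2$ at each $p_i$. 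Conic duality moreover forces $F \ge 0$ on $\mathcal{C}(\R)$.

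In the compact case ($t = 0$, which for a smooth plane curve forces $d$ even, since a smooth plane curve of odd degree cannot be compact over $\R$ by a Bezout parity argument), view $F$ as a global section of $\mathcal O_{\bar{\mathcal{C}}}(2s)$, whose zero divisor has total degree $2sd$ and contains the $N$ double zeros, accounting for $2N$. The leading form $F_{2s} = g_{2s}$ is at our disposal; I would choose it to vanish at as many pairs of complex-conjugate places at infinity as the non-degeneracy constraint $g_{2s}\notin f_d\cdot\R[x,y]_{2s-d}$ allows (one pair must remain untouched, otherwise $g$ would reduce to degree $<2s$ modulo $f$ and the penalty would be determined by the moments). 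This forces $F$ to acquire $d - 2$ additional zeros at infinity, and combining with the even-multiplicity constraint on real zeros imposed by $F \ge 0$ yields $2N \le 2sd - (d-2)$, that is $N \le ds - \lceil d/2\rceil + 1$. In the general case, decompose $\mathcal{C}(\R)$ into a compact piece and $t$ unbounded tails (one per real place at infinity), apply the compact argument to the compact piece, and handle each tail with at most two extra nodes via the one-dimensional structure of a smooth branch near its place at infinity; summing gives $N \le ds - \lceil d/2\rceil + 1 + 2t$.

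The hardest step will be the compact divisor count: extracting the exact constant $\lceil d/2 \rceil$ requires careful bookkeeping of how many complex-conjugate pairs at infinity can absorb zeros of $F$ without violating the non-degeneracy of $g$ modulo $f$, together with a parity analysis at each real affine zero of $F$. Smoothness of $\mathcal{C}$ is essential throughout, both to ensure $\nabla f(p_i)\neq 0$ at every active node (so that the KKT double-vanishing condition is well-defined on $\mathcal{C}$) and to supply predictable local intersection multiplicities at each place at infinity of $\bar{\mathcal{C}}$.
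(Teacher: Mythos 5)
Your compact-case argument is a genuinely different route from the paper's: the paper minimizes $\sum_i\omega_i$ with the zeroth moment \emph{omitted} from the constraints, so the Lagrange certificate is a polynomial $G$ of degree $2s-1$ (not $2s$), Bézout gives at most $d(2s-1)$ intersections with $\mathcal C$, halving yields $ds-\lceil d/2\rceil$ doubled nodes, and one extra node is added at the end to restore the total mass. Your variant (degree-$2s$ objective, certificate $F=g-q$, and a leading form $g_{2s}$ forced to vanish at $d/2-1$ conjugate pairs of places at infinity) produces the same count $2N\le 2sd-(d-2)$, but it leaves unproved the existence of a $g$ that is simultaneously strictly positive on $\mathcal C(\R)$, non-degenerate modulo $f$, and vanishing at the prescribed complex places at infinity; the paper's choice of a degree-$(2s-1)$ multiplier sidesteps this entirely and gets the saving of $d$ intersection points for free. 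That said, your mechanism is plausible and, if the existence of such a $g$ were established, would give the compact bound.

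The genuine gap is in the non-compact case. You propose to ``decompose $\mathcal C(\R)$ into a compact piece and $t$ unbounded tails, apply the compact argument to the compact piece, and handle each tail with at most two extra nodes.'' This does not work as a reduction. First, the compact piece of $\mathcal C(\R)$ is a semialgebraic arc, not a compact algebraic curve, so the divisor count on $\bar{\mathcal C}$ (which used that \emph{all} places at infinity are non-real) is not available for it. Second, and more seriously, if you split $\mu$ as a sum of its restrictions to the pieces, each tail carries a measure whose moments up to degree $2s-1$ must be reproduced; a one-dimensional tail generically requires on the order of $s$ Gaussian-type nodes, not two, so ``two extra nodes per tail'' is unjustified and summing the pieces does not give $ds-\lceil d/2\rceil+1+2t$. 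The paper handles non-compactness differently: it keeps a single global optimization problem but compactifies the feasible set by adding the constraint $x_i^2+y_i^2\le R$ for large $R$; the Lagrange/Bézout count then applies to the nodes interior to the disk, while the nodes on the boundary circle are bounded by $2t$ because for $R$ large only the $t$ unbounded branches can reach the circle, each meeting it in at most two points. You would need to replace your decomposition step by some such compactification (or another device that controls the nodes escaping to infinity) for the stated bound to follow.
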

\begin{proof}
Let $F \in \R[x,y]$ be the defining polynomial of $\mathcal{C}$. We consider the optimization problem
\[
\begin{cases}
\min \; \sum_{i=1}^N \omega_i, \\[2pt]
\sum_{i=1}^N \omega_i x_i^a y_i^b = m_{ab}, & 1 \le a+b \le 2s-1, \\
F(x_i,y_i)=0, & i=1,\dots,N, \\
x_i^2 + y_i^2 \le R,
\end{cases}
\]
where $R>0$ is chosen so that the feasible set is non-empty and compact.   
Existence of such an $R$  follows from the existence of quadrature rules per se \cite{richter1957parameterfreie}. 

Intersections with the circle $x^2+y^2=R$ contribute at most $2t$ additional points. We first analyse the optimal pairs $(x_i,y_i)$ in the interior of the disk.

By the method of Lagrange multipliers, there exist $\lambda_{ab}$ and $\lambda_i$ such that the Lagrangian  
\[
\mathcal{L} = \sum_{i=1}^N \omega_i 
+ \sum_{0\le a+b\le 2s-1} \lambda_{ab} \Bigl(\sum_{i=1}^N \omega_i x_i^a y_i^b - m_{ab}\Bigr) 
+ \sum_{i=1}^N \lambda_i F(x_i,y_i)
\]
is stationary with respect to all variables, i.e.,
{\begin{align*}
    0=&\frac{\partial \mathcal L}{\partial \lambda_{ab}}=\sum_{i=1}^N \omega_i x_i^a y_i^b - m_{ab}, \\    
    0=&\frac{\partial \mathcal L}{\partial \lambda_{i}}=F(x_i,y_i),\\
    0=&\frac{\partial \mathcal L}{\partial \omega_i}=1+\sum_{0\le a+b\le 2s-1} \lambda_{ab} x_i^a y_i^b,  \\
    0=&\frac{\partial \mathcal L}{\partial x_i}=\sum_{0\le a+b\le 2s-1} \lambda_{ab} \omega_i a x_i^{a-1} y_i^b+\frac{\partial F}{\partial x_i} (x_i,y_i),\\
    0=&\frac{\partial \mathcal L}{\partial y_i}=\sum_{0\le a+b\le 2s-1} \lambda_{ab} \omega_i b x_i^{a} y_i^{b-1}+\frac{\partial F}{\partial y_i} (x_i,y_i).
\end{align*}}
Therefore, the optimal nodes $(x_i,y_i)$ satisfy  
\[
G(x,y) = 1 + \sum_{1\le a+b\le 2s-1} \lambda_{ab} x^a y^b = 0
\]
and $F(x,y)=0$. Moreover, the gradient condition  
\[
\nabla G(x_i,y_i) = - \frac{\lambda_i}{\omega_i} \nabla F(x_i,y_i)
\]
implies that the two curves $F=0$ and $G=0$ have parallel tangent lines at each node, hence each intersection point has multiplicity at least two.  

By Bézout’s theorem, $F$ and $G$ have at most $d(2s-1)$ intersection points in $\C^2$, so at most  
\[
\Bigl\lfloor \tfrac{d(2s-1)}{2} \Bigr\rfloor = ds - \left\lceil \tfrac d2 \right\rceil
\]
real nodes of multiplicity at least two. 

Finally, since the equation for the zeroth moment $\sum_i \omega_i = m_{00}$ is not imposed, we may add one more node at the origin with the remaining weight to correct the total mass. Adding the potential solutions at the boundary of the disk yields the bound  
\[
N \;\le\; ds - \left\lceil \tfrac d2 \right\rceil + 1 + 2t,
\]
as claimed.
\end{proof}

\begin{remark}
    {The irreducibility assumption in Theorem \ref{prop:planecurve} plays mostly a technical role for the proof. The result extends naturally also to non-irreducible curves by applying the same reasoning to each irreducible component separately. The resulting bound differs only in lower-order terms due to the ceiling.}
\end{remark}

\begin{cor}
If $\mathcal C \subset \R^2$ is compact, then every measure supported on $\mathcal C$ admits a quadrature rule of strength $2s-1$ with at most  
\[
N \;\le\; ds - \left\lceil \tfrac d2 \right\rceil + 1
\]
nodes.
\end{cor}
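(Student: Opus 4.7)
The plan is to obtain the corollary as the specialization $t=0$ of Theorem \ref{prop:planecurve}. The key observation is that if $\mathcal{C} \subset \R^2$ is a smooth real algebraic curve whose set of real affine points is compact, then no real branch of $\mathcal{C}$ escapes to infinity, so the projective closure of $\mathcal{C}$ meets the line at infinity in no real point. Consequently the number $t$ of real places at infinity vanishes, and substituting $t=0$ into the bound of Theorem \ref{prop:planecurve} gives
\[
N \;\le\; ds - \left\lceil \tfrac d2 \right\rceil + 1,
\]
which is exactly the claim.

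Equivalently, and perhaps more transparently, one can rerun the optimization argument from the proof of Theorem \ref{prop:planecurve} without relying on the compactness constraint. Because $\operatorname{supp}(\mu) \subseteq \mathcal{C}$ is already compact, one may choose $R > \max_{(x,y)\in \mathcal{C}}(x^2+y^2)$, so that the constraint $x_i^2+y_i^2 \le R$ is strictly inactive at every feasible point. The Lagrange system then involves only the moment multipliers $\lambda_{ab}$ and the curve multipliers $\lambda_i$; the auxiliary polynomial $G$ of degree at most $2s-1$ is tangent to $\{F=0\}$ at each optimal node, and Bézout still bounds the number of such real intersection points by $\lfloor d(2s-1)/2\rfloor = ds - \lceil d/2\rceil$. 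Adding one extra node to correct the zeroth moment yields the claimed bound, with no boundary contribution to include.

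The only subtlety, which is not really an obstacle, is confirming that compactness of $\mathcal{C}$ genuinely forces $t=0$ in the first derivation, or equivalently that the maximum $\max_{(x,y)\in\mathcal{C}}(x^2+y^2)$ is finite in the second; both statements are immediate from the definition of compactness, since a real place at infinity would produce a sequence of real affine points of unbounded norm.
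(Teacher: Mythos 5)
Your proposal is correct and follows essentially the same route as the paper: compactness forces $t=0$ (no real places at infinity), and substituting into Theorem \ref{prop:planecurve} gives the bound, with the paper likewise noting that ``the remaining argument is identical.'' Your second paragraph merely spells out why the ball constraint is inactive, which is the same observation the paper leaves implicit.
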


\begin{proof}
When $\mathcal C$ is compact, there are no places at infinity, so the term $2t$ in Theorem~\ref{prop:planecurve} vanishes. The remaining argument is identical, yielding the stated bound.
\end{proof}

\begin{remark}
For a plane algebraic curve of degree $d$, Riener and Schweighofer~\cite{RS} proved that there exists a quadrature rule of strength $2s-1$ with at most $ds$ nodes supported on the curve. 
Our bound in Theorem~\ref{prop:planecurve} improves this to  
\[
N \;\le\; d\,s - \left\lceil \tfrac d2 \right\rceil + 1 + 2t.
\]
The difference between the two bounds is at least  
\[
d\,s - \Bigl(d\,s - \left\lceil \tfrac d2 \right\rceil + 1 + 2t\Bigr) 
= \left\lceil \tfrac d2 \right\rceil - 1 - 2t,
\]
so for any fixed curve of degree $d$, the improvement is independent of the strength.
\end{remark}

\section{Quadrature on rational curves}\label{sec:rational}

In this section, we focus on \emph{rational curves}, i.e., genus zero real algebraic curves admitting a parametrization by rational functions. Such curves can be highly singular and may be embedded in ambient spaces of arbitrary dimension. Without imposing smoothness or restricting to the plane, we derive explicit bounds for quadrature rules supported on rational curves that extend the result of \Cref{prop:planecurve} beyond the smooth planar case. Our bounds recover, for rational plane curves, results comparable to those obtained by Zalar~\cite{zalar}, while applying equally to arbitrary rational curves in any dimension, thus covering a far broader class of singular and higher-dimensional situations.

Let $\mathcal C \subset \R^n$ be a rational curve of degree $d$, i.e., a real algebraic curve admitting a parametrization
\begin{equation}\label{eq:parametrization}
\varphi : \R \dashrightarrow \R^n, 
\qquad 
t \mapsto \left(\frac{\varphi_1(t)}{\varphi_0(t)}, \dots, \frac{\varphi_n(t)}{\varphi_0(t)} \right),
\end{equation}
with $\varphi_i \in \R[t]_{\leq d}$.  

Since $\mathcal C$ is no longer contained in a plane, we take as a natural starting point the Carathéodory bound for quadrature rules of strength $s$ in $\R^n$,
\[
N \;>\; \binom{n+s}{s} = \dim \R[x_1,\dots,x_n]_s,
\]
which gives the maximal number of nodes one might expect before exploiting the specific geometry of $\mathcal C$. Our goal is to improve substantially on this bound by using the rational parametrization \eqref{eq:parametrization}.
\begin{lemma}
Let $\varphi(t) = (t, t^{d-1}, t^d)$ be a parametrization of a curve in $\R^3$.  
Then the map
\[
\psi:\R[x,y,z]_{\le s} \longrightarrow \R[t]_{\le ds},
\qquad
p(x,y,z) \longmapsto p(\varphi(t))
\]
is surjective whenever $s \ge d$.
\end{lemma}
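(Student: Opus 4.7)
The plan is to reduce the surjectivity statement to an elementary combinatorial claim about non-negative integer representations. Since $\psi$ is linear and sends the monomial $x^a y^b z^c$ to the pure power $t^{a + (d-1)b + dc}$, the image of $\psi$ is spanned by $\{\,t^k : k \in E\,\}$, where
$$E \;:=\; \bigl\{\,a + (d-1)b + dc \;:\; a, b, c \in \mathbb{Z}_{\ge 0},\ a + b + c \le s\,\bigr\}.$$
Surjectivity onto $\R[t]_{\le ds}$ is therefore equivalent to the equality $E = \{0, 1, \ldots, ds\}$. The inclusion $E \subseteq \{0, \ldots, ds\}$ is immediate from $a + (d-1)b + dc \le d(a+b+c) \le ds$, so all the content of the lemma lies in the reverse inclusion.

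To establish that, I would fix an arbitrary $k \in \{0, 1, \ldots, ds\}$ and write $k = qd + r$ with $0 \le r \le d - 1$; note that $q \le s$, with $q \le s - 1$ whenever $r > 0$. The argument then splits into three cases. First, if $r = 0$, take $(a,b,c) = (0,0,q)$. Second, if $r > 0$ and $q + r \le s$, take the greedy decomposition $(a,b,c) = (r,0,q)$. Third, and most interestingly, if $r > 0$ and $q + r > s$, use the alternative decomposition
$$(a,b,c) \;=\; (0,\,d-r,\,q+r-d+1),$$
which trades the $r$ many $x$-letters of weight $1$ for a single block of $d-r$ copies of $y$ (each of weight $d-1$) together with one extra $z$ of weight $d$. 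A direct calculation confirms $(d-1)(d-r) + d(q+r-d+1) = qd + r = k$ in this third case.

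The main obstacle is exactly this third case, and it is precisely where the hypothesis $s \ge d$ must enter. One needs to check both $c \ge 0$ and $a + b + c \le s$. For the first, the inequality $q + r > s$ combined with $s \ge d$ forces $q \ge s - r + 1 \ge d - r + 1$, so $c = q + r - d + 1 \ge 2 > 0$. For the second, $a + b + c = 0 + (d-r) + (q+r-d+1) = q + 1 \le s$, using $q \le s - 1$ since $r > 0$. Without the assumption $s \ge d$ the trade would fail, because $c$ could become negative when $q + r$ is simultaneously larger than $s$ and smaller than $d - 1$. Once all three cases are dispatched, every element of $\{0, 1, \ldots, ds\}$ lies in $E$, and the surjectivity of $\psi$ follows.
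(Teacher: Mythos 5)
Your proof is correct, and it takes a genuinely different (though equally elementary) route to the same combinatorial core. Both arguments reduce surjectivity to showing that every exponent $k\in\{0,1,\dots,ds\}$ can be written as $a+(d-1)b+dc$ with $a,b,c\ge 0$ and $a+b+c\le s$. The paper organizes the monomials by $\sigma=b+c$, shows that for each fixed $\sigma$ the achievable exponents form an interval $[\sigma d-\sigma,\ \sigma(d-1)+s]$, and then argues that consecutive intervals overlap (using $s\ge d$) so that their union covers $[0,ds]$. You instead produce an explicit witness $(a,b,c)$ for each target $k$ via Euclidean division $k=qd+r$, with the greedy choice $(r,0,q)$ when $q+r\le s$ and the trade $(0,\,d-r,\,q+r-d+1)$ otherwise; your verification that $c\ge 2$ and $a+b+c=q+1\le s$ in the third case is exactly where $s\ge d$ enters, mirroring the overlap condition in the paper. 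Your constructive version has the advantage of being immediately checkable term by term and isolating precisely where the hypothesis is used, whereas the paper's interval-covering argument is more compact but leaves the overlap verification somewhat implicit. Both are complete proofs of the lemma.
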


\begin{proof}
Every monomial $x^a y^b z^c$ with $a+b+c \le s$ maps to  
\[
x^a y^b z^c \longmapsto t^{a + b(d-1)+ c d}.
\]
For fixed $b,c$ with $b+c \le s$, the exponents appearing are
\[
\{a + b(d-1)+ c d : 0 \le a \le s-b-c \}
= \{k \in \N : k_{\min} \le k \le k_{\max}\},
\]
where $k_{\min} = b(d-1)+cd$ (take $a=0$) and $k_{\max}=b(d-1)+cd + (s-b-c)$ (take $a=s-b-c$).  
Set $\sigma=b+c$. Then $k_{\min} = \sigma d - b$ and $k_{\max} = \sigma(d-1)-b+s$.  

For fixed $\sigma$, taking all $0 \le b \le \sigma$ produces the union of intervals
\[
I_\sigma = \bigcup_{b=0}^\sigma [\sigma d-b,\ \sigma(d-1)-b+s]\cap\N
= [\,\sigma d - \sigma,\ \sigma(d-1)+s\,]\cap \N,
\]
since the lower bound comes from $b=\sigma$ and the upper bound from $b=0$.  
This uses the assumption $s \ge \sigma$ to ensure the intervals overlap without gaps.

Now vary $\sigma$ from $0$ to $s$.  
The intervals satisfy  
\[
I_\sigma \cup I_{\sigma+1} 
= [\,\sigma d - \sigma,\ (\sigma+1)(d-1)+s\,]\cap \N,
\]
and the assumption $s \ge d$ ensures that the overlaps fill all integers up to $ds$.  
Hence
\[
\bigcup_{\sigma=0}^s I_\sigma = [0, ds]\cap \N,
\]
so every monomial $t^k$ with $0 \le k \le ds$ appears in the image.  
Thus $\psi$ is surjective.
\end{proof}

\begin{cor}\label{cor:surjective}
    Let $\varphi:\R\to \R^n$, {$n\ge 3$}$, \varphi(t)=(\varphi_1(t),\dots,\varphi_n(t))$ and $\deg \varphi_i\leq d$. If $\varphi_i$ are generic and $s\geq d$, then the map $$\psi:\R[x_1,\dots,x_n]_{\leq s}\to \R[t]_{ds},\ p(x_1,\dots,x_n)\mapsto p(\varphi(t))
    $$
    is surjective.
\end{cor}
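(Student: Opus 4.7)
The plan is to combine the preceding lemma, which supplies one explicit parametrization making $\psi$ surjective, with the standard fact that surjectivity of a linear map depending polynomially on parameters is a Zariski open condition.

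First, I would view the parameter space of admissible parametrizations as the affine space $V := (\R[t]_{\leq d})^n \cong \R^{n(d+1)}$, so that each $\varphi \in V$ determines a linear map $\psi_\varphi : \R[x_1,\dots,x_n]_{\leq s} \to \R[t]_{\leq ds}$. Fixing monomial bases on source and target, the matrix entries of $\psi_\varphi$ are polynomials in the coefficients of the $\varphi_i$. Surjectivity of $\psi_\varphi$ is equivalent to its matrix having rank equal to $\dim \R[t]_{\leq ds} = ds+1$, which in turn is equivalent to the non-vanishing of at least one minor of size $ds+1$. The set
\[
U := \{\varphi \in V : \psi_\varphi \text{ is surjective}\}
\]
is therefore Zariski open in $V$, so to conclude it suffices to show $U \neq \emptyset$.

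Next, I would exhibit an explicit element of $U$. Assuming $n \geq 3$ (as required to apply the preceding lemma directly), take
\[
\varphi^*(t) := (t,\, t^{d-1},\, t^d,\, 0,\, \dots,\, 0) \in V.
\]
For any $p \in \R[x_1,\dots,x_n]_{\leq s}$ one has $p(\varphi^*(t)) = \tilde p(t, t^{d-1}, t^d)$ where $\tilde p(x_1,x_2,x_3) := p(x_1,x_2,x_3,0,\dots,0) \in \R[x_1,x_2,x_3]_{\leq s}$. Thus $\psi_{\varphi^*}$ factors as the restriction $p \mapsto \tilde p$, which is clearly surjective onto $\R[x_1,x_2,x_3]_{\leq s}$, followed by the map of the preceding lemma. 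Since $s \geq d$, that lemma yields surjectivity of the second factor, and hence of $\psi_{\varphi^*}$, so $\varphi^* \in U$.

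Combining the two steps, $U$ is a non-empty Zariski open subset of $V$, and therefore $\psi_\varphi$ is surjective for a generic choice of $\varphi_1,\dots,\varphi_n$, which is the statement of the corollary. I expect the only genuinely delicate point to be pinning down the precise meaning of \emph{generic} and the implicit assumption $n \geq 3$; the rank-openness step is essentially formal, and the witness $\varphi^*$ is handed to us by the previous lemma, so there is no real computation beyond what has already been carried out.
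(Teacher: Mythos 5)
Your proposal is correct and follows essentially the same route as the paper: the authors likewise observe that full rank is an open condition on the coefficients $\alpha_{ij}$ and reduce to the explicit witness from the preceding lemma, extending to $n\ge 3$ via a parametrization of the form $(t,t^{d-1},t^d,\ast,\dots,\ast)$ (they pad with $1$'s where you pad with $0$'s, which is immaterial). Your explicit remark that the statement implicitly requires $n\ge 3$ is well taken, since Remark~\ref{rmk:n=2 notsurjective} shows the map is rank-deficient for $n=2$.
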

\begin{proof}
    First notice that for $n=3$ we have $\varphi_j(t)=\sum_{i=0}^d\alpha_{ij}t^i$. We showed that the matrix of the map $\psi$ has full rank for a specific choice of $\alpha_{ij}$, and thus it has full rank for a generic choice of $\alpha_{ij}$.

    To extend for $n\geq 3$, it is enough to see that $\varphi=(t,t^{d-1},t^d,1,\dots,1)$ already generates the target space.
\end{proof}

\begin{remark}\label{rmk:n=2 notsurjective}
    For $n=2$ the map is rank defective. The reason is that the curve defined by $\varphi$ in $\R^2$ has an equation given by $f$ of degree $d$, and $I=(f)$ is the kernel of the map. In particular, when $s=d$, the kernel is one-dimensional.
\end{remark}
\begin{definition}
Let $\mu$ be a finite nonnegative Borel measure supported on a curve $\mathcal C\subset\R^n$.
We say that $\mu$ is \emph{degenerate at degree $k$} if the quadratic form
\[
q_k(f)\ :=\ \int_{\mathcal C} f(x)^2\,d\mu(x)\qquad(f\in\R[x_1,\dots,x_n]_{\le k})
\]
is not positive definite on $\R[x_1,\dots,x_n]_{\le k}$, i.e., if there exists a nonzero $f\in\R[x_1,\dots,x_n]_{\le k}$ with $\int f^2\,d\mu=0$.  
Equivalently, the moment matrix $M_k(\mu)$ has a nontrivial kernel. 
\end{definition}

\begin{lemma}
Let $n\ge 3$, let $\mathcal C\subset\R^n$ be a rational curve with a polynomial parametrization $\varphi:\R\to\R^n$, $\varphi(t)=(\varphi_1(t),\dots,\varphi_n(t))$, and let $D:=\max_i\deg\varphi_i$. Let $\mu$ be a finite nonnegative Borel measure supported on $\mathcal C$, and let $\nu:=\varphi^{-1}_*\mu$ be the pullback measure on $\R$. Assume that the composition map
\[
\Psi_k:\ \R[x_1,\dots,x_n]_{\le k}\longrightarrow \R[t]_{\le Dk},\qquad p\longmapsto p\circ\varphi
\]
is surjective (e.g., this holds for generic $\varphi$ when $n\ge 3$). Then $\mu$ is degenerate at degree $k$ if and only if $\nu$ is degenerate at degree $Dk$.
\end{lemma}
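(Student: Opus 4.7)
The plan is to reduce the equivalence to the change-of-variables formula for the pullback measure $\nu = \varphi^{-1}_*\mu$. For any $p \in \R[x_1,\dots,x_n]_{\le k}$, since $\mu$ is supported on $\varphi(\R) \subseteq \mathcal C$,
\[
\int_{\mathcal C} p(x)^2\,d\mu(x) \;=\; \int_\R (p\circ\varphi)(t)^2\,d\nu(t) \;=\; \int_\R \Psi_k(p)(t)^2\,d\nu(t),
\]
and by polarization $\int pq\,d\mu = \int \Psi_k(p)\Psi_k(q)\,d\nu$ for all $p,q$. In other words the quadratic form $q_k^\mu$ on $\R[x_1,\dots,x_n]_{\le k}$ is the pullback under $\Psi_k$ of the quadratic form $q_{Dk}^\nu$ on $\R[t]_{\le Dk}$, and in the monomial bases this reads as the factorization $M_k(\mu) = \Psi_k^\top M_{Dk}(\nu)\,\Psi_k$. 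The rest of the argument is a linear-algebra consequence of this factorization together with the surjectivity of $\Psi_k$.

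For the backward direction, a nonzero $f\in\R[t]_{\le Dk}$ with $\int f^2\,d\nu = 0$ can be lifted through $\Psi_k$, by surjectivity, to a nonzero $p\in\R[x_1,\dots,x_n]_{\le k}$ with $\Psi_k(p)=f$, and the identity above yields $\int p^2\,d\mu = \int f^2\,d\nu = 0$. For the forward direction, a nonzero witness $p$ for the degeneracy of $\mu$ gives $\int \Psi_k(p)^2\,d\nu = 0$, so $\Psi_k(p)$ is a witness of degeneracy for $\nu$ \emph{provided} $\Psi_k(p)\neq 0$.

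The hard part is precisely this last step. For $n\ge 3$ the kernel of $\Psi_k$ is typically nontrivial: it consists exactly of polynomials of degree $\le k$ vanishing identically on $\varphi(\R)$, hence on $\mathcal C$, and any such $p$ automatically satisfies $\int p^2\,d\mu = 0$ while producing no information about $\nu$. The clean way to handle this is to descend to the quotient $\R[x_1,\dots,x_n]_{\le k}/\ker\Psi_k$, which via $\Psi_k$ is identified with $\R[t]_{\le Dk}$ by surjectivity, and to observe that $q_k^\mu$ descends to this quotient and coincides there with $q_{Dk}^\nu$. The kernels are then related by $\ker q_k^\mu/\ker\Psi_k = \ker q_{Dk}^\nu$, so the stated equivalence holds once one interprets the degeneracy of $\mu$ modulo the ideal of polynomials vanishing on $\mathcal C$, which is the geometrically natural reading in this setting, since elements of $\ker\Psi_k$ carry no information about $\mu$ beyond the fact that $\operatorname{supp}\mu\subseteq\mathcal C$.
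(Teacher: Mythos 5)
Your argument follows the same route as the paper: the change-of-variables identity $\int_{\mathcal C}p^2\,d\mu=\int_{\R}(p\circ\varphi)^2\,d\nu$ in both directions, with surjectivity of $\Psi_k$ used to lift a univariate witness in the backward implication. That part matches the paper's proof essentially verbatim and is correct.

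The point you raise about the forward direction is not a defect of your write-up but a genuine gap in the statement and in the paper's own proof, and you have diagnosed it correctly. The paper takes a nonzero $p$ with $\int p^2\,d\mu=0$, sets $g=p\circ\varphi$, and concludes that $\nu$ is $Dk$-degenerate --- silently assuming $g\neq 0$. But $\ker\Psi_k$ is exactly $I(\mathcal C)_{\le k}$ (polynomials of degree $\le k$ vanishing on $\varphi(\R)$), and whenever $\Psi_k$ is surjective with $\binom{n+k}{k}>Dk+1$ --- which is the typical situation for $n\ge 3$ --- this kernel is nonzero. Any nonzero $p\in\ker\Psi_k$ satisfies $\int p^2\,d\mu=0$, so under the paper's definition $\mu$ is \emph{automatically} degenerate at degree $k$, independently of $\nu$. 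Concretely, for the twisted cubic $\varphi(t)=(t,t^2,t^3)$ with $\nu$ the Lebesgue measure on $[0,1]$, the polynomial $y-x^2$ witnesses degeneracy of $\mu$ at degree $2$ while $\nu$ is nondegenerate at degree $6$, so the forward implication fails as literally stated. Your repair --- descending $q_k^\mu$ to $\R[x_1,\dots,x_n]_{\le k}/\ker\Psi_k\cong\R[t]_{\le Dk}$ and reading degeneracy of $\mu$ modulo the ideal of $\mathcal C$, equivalently via the factorization $M_k(\mu)=\Psi_k^\top M_{Dk}(\nu)\Psi_k$ --- is exactly the right fix and yields the correct statement $\ker q_k^\mu/\ker\Psi_k\cong\ker q_{Dk}^\nu$. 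The only thing I would add is to make explicit that this reinterpretation is genuinely a change to the lemma's hypotheses or definition of degeneracy, not merely a presentational choice, since the downstream results (e.g., Theorem~\ref{thm:curve-to-line}) invoke nondegeneracy of $\mu$ in the unquotiented sense.
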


\begin{proof}
 If $\mu$ is $k$-degenerate, there exists $0\neq p\in\R[x_1,\dots,x_n]_{\le k}$ with $\int_{\mathcal C}p^2\,d\mu=0$. Set $g:=p\circ\varphi\in\R[t]_{\le Dk}$. By the change of variables along $\varphi$,
\[
\int_{\R} g(t)^2\,d\nu(t)\ =\ \int_{\R} p(\varphi(t))^2\,d\nu(t)\ =\ \int_{\mathcal C} p(x)^2\,d\mu(x)\ =\ 0,
\]
so $\nu$ is $Dk$-degenerate.

 If $\nu$ is $Dk$-degenerate, there exists $0\neq g\in\R[t]_{\le Dk}$ with $\int g^2\,d\nu=0$. By surjectivity of $\Psi_k$, pick $p\in\R[x_1,\dots,x_n]_{\le k}$ with $g=p\circ\varphi$. Then
\[
\int_{\mathcal C} p(x)^2\,d\mu(x)\ =\ \int_{\R} p(\varphi(t))^2\,d\nu(t)\ =\ \int_{\R} g(t)^2\,d\nu(t)\ =\ 0,
\]
so $\mu$ is $k$-degenerate.
\end{proof}
\begin{remark}
If $\mu$ is degenerate at degree $k$, then there exists a nonzero $p\in\R[x_1,\dots,x_n]_{\le k}$ with $\int_{\mathcal C} p^2\,d\mu=0$, hence $p=0$ $\mu$-almost-everywhere and $\supp(\mu)\subseteq Z(p)\cap \mathcal C$. Equivalently, the moment matrix $M_k(\mu)$ has nontrivial kernel and $\rank M_k(\mu)<\dim\R[x_1,\dots,x_n]_{\le k}$. Under the parametrization $\varphi$, the corresponding $g=p\circ\varphi\in\R[t]_{\le Dk}$ vanishes $\nu$-almost-everywhere, so $\supp(\nu)\subseteq Z(g)$; in particular, the support of $\mu$ is contained in the image of a finite (algebraic) subset of the parameter line.
\end{remark}

\begin{thm}\label{thm:curve-to-line}
Let $\mathcal C\subset \R^n$ be a rational curve of degree $d$ with a polynomial parametrization
$\varphi:\R\to \R^n,\ t\mapsto (\varphi_1(t),\dots,\varphi_n(t))$, with $\deg \varphi_i\le d$.
Assume $n\ge 3$, the coefficients of $\varphi$ are generic, $s\ge d$, $\operatorname{supp}(\mu)\subseteq \varphi(\R)$ and $\mu$ not degenerate in degree $s$.
Then the minimal number of nodes in a quadrature rule of strength $s$ for $Q(\mu)_s$ restricted to $\mathcal C$
is bounded below by the minimal number of nodes in a quadrature rule of strength $sd$ on the line.
\end{thm}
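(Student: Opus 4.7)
The plan is to transfer any quadrature rule for \(\mu\) on the curve to one for the pullback measure \(\nu:=\varphi^{-1}_*\mu\) on the line, using the surjectivity statement from Corollary~\ref{cor:surjective} as the translation device, and to observe that the number of nodes is preserved. This immediately yields the desired inequality: whenever \(\mu\) admits a rule of strength \(s\) with \(N\) nodes, \(\nu\) admits one of strength \(sd\) with no more than \(N\) nodes, so the minimum for \(\mu\) is at least the minimum for \(\nu\) at the higher strength.

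Concretely, I would start from a quadrature rule \((\omega_1,\dots,\omega_N;\,x_1,\dots,x_N)\) of strength \(s\) for \(\mu\) with the minimal number of nodes. Since \(\operatorname{supp}(\mu)\subseteq\varphi(\R)\), for each \(i\) there exists \(t_i\in\R\) with \(\varphi(t_i)=x_i\); because \(\varphi\) is a well-defined function, distinct nodes \(x_i\) force distinct \(t_i\), so no merging is required. I then verify that \((\omega_i,t_i)_{i=1}^N\) is a quadrature rule of strength \(sd\) for \(\nu\). Given any \(q\in\R[t]_{\le sd}\), Corollary~\ref{cor:surjective} (which applies since \(n\ge 3\), \(s\ge d\), and \(\varphi\) has generic coefficients) supplies some \(p\in\R[x_1,\dots,x_n]_{\le s}\) with \(q=p\circ\varphi\). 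Using the defining property \(\mu=\varphi_*\nu\),
\[
\int_{\R} q\,d\nu \;=\; \int_{\R} (p\circ\varphi)\,d\nu \;=\; \int_{\mathcal C} p\,d\mu \;=\; \sum_{i=1}^{N}\omega_i\,p(x_i) \;=\; \sum_{i=1}^{N}\omega_i\,q(t_i).
\]
As the weights remain strictly positive, this is a genuine quadrature rule on \(\R\) with exactly \(N\) nodes, and passing to the infimum over all rules for \(\mu\) gives the claim.

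The main obstacle is ensuring that the notion of ``minimal number of nodes'' for the pullback is meaningful and non-collapsing at strength \(sd\): this is the reason for the non-degeneracy hypothesis on \(\mu\). By the preceding degeneracy lemma, \(\mu\) non-degenerate at degree \(s\) is equivalent to \(\nu\) non-degenerate at degree \(sd\), so the univariate minimal quadrature count for \(\nu\) at strength \(sd\) is well defined and matches the classical Gauss value. A secondary technical point is that the assumptions \(s\ge d\), \(n\ge 3\), and genericity of the coefficients of \(\varphi\) are all essential for Corollary~\ref{cor:surjective}; without surjectivity (see Remark~\ref{rmk:n=2 notsurjective} for \(n=2\)), the pullback would only test \(\nu\) against the image subspace \(\Psi(\R[x_1,\dots,x_n]_{\le s})\subsetneq \R[t]_{\le sd}\), and one could not upgrade the strength from \(s\) to \(sd\).
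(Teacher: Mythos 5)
Your proposal is correct and follows essentially the same route as the paper's proof: pull back the nodes via the parametrization, use the surjectivity of the restriction map from Corollary~\ref{cor:surjective} to show the pulled-back rule has strength \(sd\) for \(\nu=\varphi^{-1}_*\mu\), and invoke the degeneracy lemma so that the univariate minimal node count at strength \(sd\) is governed by the Gaussian bound. Your added remark that distinct nodes on the curve yield distinct parameters \(t_i\) is a small but valid clarification not spelled out in the paper.
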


\begin{proof}
Suppose $Q(\mu)_s$ admits a quadrature rule on $\mathcal C$ with nodes $x^{(1)},\dots,x^{(k)}\in \mathcal C$ and
positive weights $\omega_1,\dots,\omega_k$, i.e.
\[
\int_{\mathcal C} p(x)\,d\mu(x)\;=\;\sum_{i=1}^k \omega_i\,p\bigl(x^{(i)}\bigr)
\quad\text{for all }p\in \R[x_1,\dots,x_n]_{\le s}.
\]
Since $\operatorname{supp}(\mu)\subseteq \varphi(\R)$, pick $t_i\in\R$ with $\varphi(t_i)=x^{(i)}$.
Define the pullback measure $\mu' := \varphi^{-1}_*\mu$ on $\R$ by $\mu'(B)=\mu(\varphi(B))$.
Then, for any $p\in \R[x_1,\dots,x_n]_{\le s}$,
\[
\int_{\R} p(\varphi(t))\, d\mu'(t)\;=\;\int_{\mathcal C} p(x)\,d\mu(x)
\;=\;\sum_{i=1}^k \omega_i\,p\bigl(\varphi(t_i)\bigr).
\]
Set $g(t):=p(\varphi(t))$. Since $\deg g \le s\cdot d$, we have $g\in \R[t]_{\le sd}$.

We claim that for $n\ge 3$, generic $\varphi$, and $s\ge d$, every polynomial
$g\in \R[t]_{\le sd}$ arises as $g(t)=p(\varphi(t))$ for some $p\in \R[x_1,\dots,x_n]_{\le s}$.
This is the surjectivity of the restriction map
\[
\psi:\R[x_1,\dots,x_n]_{\le s}\longrightarrow \R[t]_{\le sd},\qquad p\longmapsto p(\varphi(t)),
\]
which holds generically in the coefficients of $\varphi$ when $n\ge 3$ (and fails for $n=2$ because of
the curve relation) by Corollary \ref{cor:surjective}.

Hence the equality above holds for all $g\in \R[t]_{\le sd}$. Therefore, the same $k$ nodes $t_1,\dots,t_k$
with weights $\omega_1,\dots,\omega_k$ form a quadrature rule of strength $sd$ for the univariate measure $\mu'$:
\[
\int_{\R} g(t)\, d\mu'(t)\;=\;\sum_{i=1}^k \omega_i\, g(t_i)\qquad\forall\, g\in \R[t]_{\le sd}.
\]
Since $\mu'$ is nondegenerate in degree $sd$ and Gaussian quadrature is optimal for nondegenerate measures (see \cite[Thm 9.2]{RS}) \, a quadrature rule exact for degrees up to $m$ requires
at least $\lceil (m+1)/2\rceil$ nodes. With $m=sd$, this gives $k\ge \lceil (sd+1)/2\rceil$.
Equivalently, if $sd=2\ell-1$ then $k\ge \ell$, and if $sd=2\ell$ then $k\ge \ell+1$.
\end{proof}

\begin{cor}\label{cor:minimalnumberof nodes}
Let $\mathcal C\subset \R^n$ be a degree $d$ rational curve with a generic polynomial parametrization and $n\geq3$. Suppose $2\ell, 2s-1\geq d$, then
any quadrature rule in $Q_{2s-1}(\mu)$ on $\mathcal C\subset \R^n$ has at least $ds-\lceil\frac{d}{2} \rceil+1$ nodes, and
any quadrature rule in $Q_{2\ell}(\mu)$ on $\mathcal C$ has at least $d\ell+1$ nodes.
\end{cor}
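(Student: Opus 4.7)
The plan is to invoke Theorem \ref{thm:curve-to-line} directly, with the appropriate choice of strength, and then reduce the question to the classical optimality bound for univariate Gaussian quadrature. Under the hypotheses of the corollary ($n\ge 3$, generic polynomial parametrization, and either $2s-1\ge d$ or $2\ell\ge d$), the theorem asserts that any quadrature rule of strength $m$ on $\mathcal C$ with $k$ nodes pulls back through $\varphi^{-1}$ to a quadrature rule of strength $md$ on the line with the same number of nodes $k$, for the measure $\mu'=\varphi^{-1}_*\mu$. The Gaussian optimality bound cited in the theorem (via \cite[Thm.~9.2]{RS}) then gives
\[
k \;\ge\; \left\lceil \frac{md+1}{2}\right\rceil.
\]

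For the odd-degree statement I would take $m=2s-1$, which is admissible since $2s-1\ge d$. Then
\[
k \;\ge\; \left\lceil \frac{(2s-1)d+1}{2}\right\rceil \;=\; \left\lceil sd-\frac{d-1}{2}\right\rceil.
\]
A short parity check on $d$ rewrites this as $sd-\lceil d/2\rceil+1$: for odd $d$ the quantity $sd-(d-1)/2$ is already an integer and equals $sd-\lceil d/2\rceil+1$ because $\lceil d/2\rceil=(d+1)/2$; for even $d$ the ceiling adds $1/2$, giving $sd-d/2+1 = sd-\lceil d/2\rceil+1$. For the even-degree statement I would instead take $m=2\ell$ (admissible since $2\ell\ge d$) and compute
\[
k \;\ge\; \left\lceil \frac{2\ell d+1}{2}\right\rceil \;=\; \ell d+1,
\]
since $2\ell d$ is even and the $+1$ forces rounding up by one.

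The only real subtlety is ensuring that the non-degeneracy hypothesis in Theorem \ref{thm:curve-to-line} is in force, since the corollary's statement is phrased without it. Under a generic polynomial parametrization, non-degeneracy of $\mu'$ in degree $md$ is equivalent to non-degeneracy of $\mu$ in degree $m$ by the degeneracy-transfer lemma preceding the theorem, and is a standard background assumption for ``minimal number of nodes'' results; I would either treat it as implicit or add a one-line remark to that effect. Aside from this, the proof is a direct application of Theorem \ref{thm:curve-to-line} followed by the ceiling arithmetic above, so no genuine new difficulty arises here.
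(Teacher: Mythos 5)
Your proof is correct and is exactly the intended argument: the paper states this corollary without proof as a direct specialization of Theorem \ref{thm:curve-to-line} to $m=2s-1$ and $m=2\ell$, and your ceiling arithmetic checks out in both parity cases. Your observation that the nondegeneracy hypothesis of Theorem \ref{thm:curve-to-line} is silently assumed in the corollary is a fair point about the paper's statement (a degenerate $\mu$ has finite support and can violate the lower bound), and handling it as you propose is appropriate.
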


\begin{thm}\label{thm:quadraturerational}
    Let $\mathcal C\subset \mathbb R^n$ be a degree $d$ affine rational curve with parametrization $$\varphi:t\mapsto \left(\frac{\varphi_1(t)}{\varphi_0(t)},\dots,\frac{\varphi_n(t)}{\varphi_0(t)}\right),$$ where $\deg \varphi_i=d_i\leq d$ and $D=\max\{d_1,\dots,d_n\}$. Suppose $\varphi_0$ has $p\geq 0$ distinct real zeros. Then every measure supported on $\mathcal C$ admits a quadrature rule of strength $2s-1$ with at most $$N\leq Ds-\left\lceil \frac{D}{2}\right\rceil+p+1,$$
    nodes.
\end{thm}
\begin{proof}
    Let $\xx_i=(x_{i_1},\dots,x_{i_n})\in\mathcal C\subset\R^n$ and $P_\alpha(\omega_i,\xx_i)=\sum_{i=1}^N\omega_ix_{i_1}^{\alpha_1}\cdots x_{i_n}^{\alpha_n}$. We recall that since $\deg(\mathcal C)=d$, then for at least one $i\in\{0,\dots,n\}$ we have $d_i=d$. {Using the parametrization of $\mathcal C$, we write $x_i = \varphi(t_i)$, and define $P_\alpha(\omega_i, t_i) := P_\alpha(\omega_i, \varphi(t_i)) =\sum_{i=1}^N\omega_i\left(\frac{\varphi_1}{\varphi_0}(t_i)\right)^{\alpha_1}\cdots\left(\frac{\varphi_n}{\varphi_0}(t_i)\right)^{\alpha_n}$. Thus, a quadrature rule with measure $\mu$ in $\R^n$ restricted to $\mathcal C$ corresponds to $(\omega_i,s_i)\in \R^N_{\geq 0}\times \R^N$ such that $P_{\alpha}(\omega_i,s_i)=m_\alpha$ for all $|\alpha|\leq 2s-1$.} 
    
    Let $Z=\{z_1,\dots,z_p\}$ the real zeros of $\varphi_0$ and $h(t)=t^2+\sum_{i=1}^p\frac{1}{(t-z_i)^2}$. We consider the following optimisation problem $$
    \begin{cases}
        \min \sum^N_{i=1} h(t_i),\\
        P_\alpha(\omega_i,t_i)-m_\alpha=0,\ |\alpha|\leq 2s-1.
    \end{cases}
    $$
    Using a singular version of Lagrange multipliers \cite[Theorem
6.1.1, Remark 6.1.2(iv)]{Cla90} on $U=\mathbb R\setminus Z$, there exists $\lambda_\alpha$, $|\alpha|\leq 2s-1$ such that $$\mathcal L(\lambda_{\alpha}, t_i,\omega_i)={\lambda_0}\sum_{i=1}^N h(t_i)+\sum_{|\alpha|\leq 2s-1} \lambda_{\alpha}(P_{\alpha}(\omega_i,t_i)-m_{\alpha}),$$ and the local minima are the solutions of 
        \begin{align*}
             0=&\frac{\partial\mathcal L}{\partial \lambda_{\alpha}}=P_{\alpha}(\omega_i,t_i)-m_{\alpha},\\
     0=&\frac{\partial\mathcal L}{\partial \omega_i}=\sum_{|\alpha|\leq 2s-1}\lambda_{\alpha}\varphi_1(t_i)^{\alpha_1}\cdots\varphi_n(t_i)^{\alpha_n},\\
     0=&\frac{\partial\mathcal L}{\partial t_i}={\lambda_0}h'(t_i) +\sum_{|\alpha|\leq 2s-1}\omega_i\lambda_{\alpha}[\alpha_1\varphi_1(t_i)^{\alpha_1-1}\varphi_1'(t_i)\varphi_2(t_i)^{\alpha_2}\cdots \varphi_n(t_i)^{\alpha_n}+\cdots\\   &\cdots +\alpha_n\varphi_1(t_i)^{\alpha_1}\cdots \varphi_{n-1}(t_i)^{^{\alpha_{n-1}}}\varphi_n(t_i)^{\alpha_n-1}\varphi_n'(t_i)].
    \end{align*}
    Let $H(t)=\sum \lambda_{\alpha}\varphi_1(t)^{\alpha_1}\cdots\varphi_n(t)^{\alpha_n}$, then the two last equations amount to $H(t_i)=0$ and $H'(t_i)=-\frac{{\lambda_0}}{\omega_i}h'(t_i)$. 
    
    {We first deal with the case ${\lambda_0= 0}$. Notice this amounts to $H(t_i)=0$ and $H'(t_i)=0$ for all $i=1,\dots, N$}. Thus, each node is a multiplicity-two root of $H$, so there are at most $\left\lfloor \frac{\deg H}{2}\right\rfloor= \left\lfloor \frac{(2s-1)D}{2}\right\rfloor= Ds-\left\lceil\frac{D}{2}\right\rceil$ nodes.

   {If $\lambda_0\neq0$, we may assume without lost of generality, $\lambda_0=1$}. Therefore, we reduce the problem to analysing the signs of the derivatives of $H$ and $h$ at the roots $t_i$ of $H$. Notice $h''\geq 0$, so $h'$ is monotone, so $h\geq0$ is concave is each interval $(z_i,z_{i+1})$, i.e. there exists a unique minima $y_i\in(z_i,z_{i+1})$ (with $z_0=-\infty$,$z_{p+1}=\infty$), moreover both $H$ and $h$ are $C^\infty$ on $U$. Let $T_i=\{t_j\in\R|H(t_j)=0, t_j\in (z_i,z_{i+1})\}$ and $T_{i_0}=\{t_j\in\R|H(t_j)=0, t_j\in (z_i,y_i)\}$, $T_{i_1}=\{t_j\in\R|H(t_j)=0, t_j\in (y_i,z_{i+1})\}$. Notice $\sum_{i=0}^p \#T_i\leq (2s-1)D=\deg(H)$.

    It follows that $t_j\in T_{i_0}$ is a node if and only if $H'(t_j)\geq0$, likewise $t_j\in T_{i_1}$ is a node only if $H'(t_j)\leq 0$. We first show that the maximum number of nodes in $T_i$ depends only on the parity of $\#T_i$.

    Indeed, if $\#T_i$ is odd, then without lost of generality $\#T_{i_0}$ is even and $\#T_{i_1}$ is odd. Suppose $t_{j_1}$ is the smallest and $t_{j_k}$ the largest roots in $T_{i_0}$. Then there are at most $\#T_{i_0}/2$ roots with nonnegative derivative, as consecutive roots either change the sign of the derivative, or are not a simple root. On the other hand, by the same argument $T_{i_1}$ has at most $\lceil\#T_{i_1}/2 \rceil$ roots with nonpositive derivative. Thus the maximum number of nodes in $T_i$ is $\lceil\#T_i/2 \rceil$

    Finally, if $\#T_i$ is even, then $\#T_{i_0},\#T_{i_1}$ have the same parity. If both are even, then the number of nodes is upper bounded by $\#T_i/2$. On the other hand, if they are both odd, then the maximum number of nodes in each $T_{i_j}$ is $\lceil \#T_{i_j}/2\rceil$, thus the maximal number of nodes in $T_i$ is $\lceil \#T_{i_0}/2\rceil+\lceil \#T_{i_1}/2\rceil= \#T_{i}/2+1$.

     Suppose $T_0,\dots,T_j$ have even cardinality, while $T_{j+1},\dots,T_{p}$ have odd cardinality. Thus the number of nodes is upper bounded by $$
    \sum_{i=0}^j \left(\frac{\#T_i}{2}+1\right)+\sum_{i=j+1}^{p}\left(\frac{\#T_i}{2}+\frac{1}{2}\right)\leq\frac{(2s-1)D}{2} +\frac{p+j}{2}+1=sD+\frac{p+j-D}{2}+1,
    $$
    which is maximum when $j=p$ if $D$ is even, and $j=p-1$ when $D$ is odd. In both cases, the previous equality amounts to $
    sD-\left\lceil \frac{D}{2}\right\rceil +p+1.
    $

\end{proof}
\begin{remark}\label{rmk:optimaln>3}
    Notice that when $\varphi_0$ has no real zeroes, we have $j=0=p$ if $D$ is even, and $j=-1=p-1$ if $D$ is odd. \Cref{thm:quadraturerational} provides the respective bounds $Ds-\frac{D}{2}+1$ and $Ds-\lfloor D/2\rfloor$. We notice that these are the optimal bounds on the number of nodes for $Q_{(2s-1)D}(\mu)$ on the line. In particular, when  $n\geq 3$, $\varphi_0$ is a constant, and the others $\varphi_j$ are generic, then Corollary \ref{cor:minimalnumberof nodes} shows this is the optimal number of nodes.
\end{remark}

\begin{thm}\label{thm:quadraturerationaleven}
Let $\mathcal C \subset \mathbb R^n$ be a degree $d$ affine rational curve with parametrization
\[
t \mapsto \left(\frac{\varphi_1(t)}{\varphi_0(t)}, \dots, \frac{\varphi_n(t)}{\varphi_0(t)}\right),
\]
where $\deg \varphi_i = d_i \le d$ and $D = \max\{d_1,\dots,d_n\}$.
Suppose $\varphi_0$ has $p \ge 0$ distinct real zeros.  
Then every measure supported on $\mathcal C$ admits a quadrature rule of strength $2s$ with at most
\[
N \;\le\; D s + p + 1
\]
nodes.
\end{thm}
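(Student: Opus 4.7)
The plan is to adapt the proof of Theorem \ref{thm:quadraturerational} almost verbatim: I would keep the same penalty $h(t)=t^2+\sum_{j=1}^p(t-z_j)^{-2}$ on $U=\R\setminus Z$ (where $Z$ is the set of real zeros of $\varphi_0$) and the same pullback of the moment-matching equations via $\varphi$, but enlarge the constraint set from $|\alpha|\le 2s-1$ to $|\alpha|\le 2s$, and then redo the final parity count with the new degree bound on the auxiliary polynomial.

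Concretely, I would first argue existence of a minimizer of $\sum_i h(t_i)$ subject to $P_\alpha(\omega_i,t_i)=m_\alpha$ for $|\alpha|\le 2s$, using Richter--Tchakaloff for feasibility together with the coercivity of $h$ at the points of $Z$ and at infinity. Applying the singular Lagrange multiplier rule exactly as in the proof of Theorem \ref{thm:quadraturerational}, I obtain multipliers $\lambda_\alpha$ such that the optimal nodes satisfy $H(t_i)=0$ and $\omega_iH'(t_i)=-h'(t_i)$, where
\[
H(t)=\sum_{|\alpha|\le 2s}\lambda_\alpha\,\varphi_1(t)^{\alpha_1}\cdots\varphi_n(t)^{\alpha_n},\qquad \deg H\le 2sD.
\]
The subsequent interval analysis is unchanged: on each $(z_k,z_{k+1})$ (with $z_0=-\infty$, $z_{p+1}=+\infty$) the penalty $h$ is smooth and strictly convex with a unique minimum $y_k$, and splitting the root set $T_k$ of $H$ in this interval into $T_{k_0}\subset(z_k,y_k)$ and $T_{k_1}\subset(y_k,z_{k+1})$, the sign constraints $H'(t_j)\ge 0$ on $T_{k_0}$ and $H'(t_j)\le 0$ on $T_{k_1}$ yield, as before, at most $\lceil \#T_k/2\rceil$ nodes if $\#T_k$ is odd and at most $\#T_k/2+1$ nodes if $\#T_k$ is even.

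The only genuinely new step, and where I expect the main bookkeeping to sit, is the final count. Relabeling so that $T_0,\ldots,T_j$ have even and $T_{j+1},\ldots,T_p$ odd cardinality, the number of nodes is at most
\[
N\;\le\;\frac{\sum_{k=0}^p \#T_k}{2}+(j+1)+\frac{p-j}{2},
\]
subject to $\sum\#T_k\le 2sD$ and $\sum\#T_k\equiv p-j\pmod{2}$. The essential new observation, and the reason the bound improves by $\lceil D/2\rceil$ compared to the odd-degree case, is that $2sD$ is now always even: the constraint $\sum\#T_k=2sD$ is compatible with $p-j$ even, and maximizing in that regime forces $j=p$ (all $T_k$ of even cardinality), giving $N\le sD+p+1$. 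The alternative parity ($p-j$ odd, which forces $\sum\#T_k\le 2sD-1$) yields only $sD+p$ and is therefore strictly worse. The hardest, though elementary, part of the write-up is making these parity statements rigorous; one subtle sanity check to perform is that, unlike in the plane-curve argument, no separate zeroth-moment correction enters here, since $|\alpha|=0$ is already imposed among the constraints, so the entire gap between the bounds of Theorems \ref{thm:quadraturerational} and \ref{thm:quadraturerationaleven} is accounted for purely by the parity of $\deg H$.
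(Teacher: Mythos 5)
Your proposal follows the paper's own proof essentially verbatim: the same penalty $h$, the same singular Lagrange multiplier setup yielding $H(t_i)=0$ and $\omega_i H'(t_i)=-h'(t_i)$ with $\deg H\le 2sD$, and the same interval-by-interval parity count, with your final maximization over $j$ being a slightly more explicit version of the paper's observation that the count is maximized when all $\#T_i$ are even. The only cosmetic quibble is the phrase ``the bound improves by $\lceil D/2\rceil$'' --- the strength-$2s$ bound is of course \emph{larger} than the strength-$(2s-1)$ bound by that amount --- but your identification of the even parity of $2sD$ as the source of the difference is exactly right.
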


\begin{proof}
{Let $\xx_i=(x_{i_1},\dots,x_{i_n})\in\mathcal C\subset\R^n$ and $P_\alpha(\omega_i,\xx_i)=\sum_{i=1}^N\omega_ix_{i_1}^{\alpha_1}\cdots x_{i_n}^{\alpha_n}$. We recall that since $\deg(\mathcal C)=d$, then for at least one $i\in\{0,\dots,n\}$ we have $d_i=d$. {Using the parametrization of $\mathcal C$, we write $x_i = \varphi(t_i)$, and define $P_\alpha(\omega_i, t_i) := P_\alpha(\omega_i, \varphi(t_i)) =\sum_{i=1}^N\omega_i\left(\frac{\varphi_1}{\varphi_0}(t_i)\right)^{\alpha_1}\cdots\left(\frac{\varphi_n}{\varphi_0}(t_i)\right)^{\alpha_n}$. Thus, a quadrature rule with measure $\mu$ in $\R^n$ restricted to $\mathcal C$ corresponds to $(\omega_i,s_i)\in \R^N_{\geq 0}\times \R^N$ such that $P_{\alpha}(\omega_i,s_i)=m_\alpha$ for all $|\alpha|\leq 2s$.}}

Let $Z = \{z_1,\dots,z_p\}$ be the real zeros of $\varphi_0$, and define  
\[
h(t) = t^2 + \sum_{i=1}^p \frac{1}{(t-z_i)^2}.
\]
We consider the optimization problem
\[
\begin{cases}
\min \sum_{i=1}^N h(t_i), \\
P_\alpha(\omega_i,t_i) - m_\alpha = 0 \quad \text{for all } |\alpha|\le 2s,
\end{cases}
\]
where
\[
P_\alpha(\omega_i,t_i)=\sum_{i=1}^N \omega_i \left(\frac{\varphi_1(t_i)}{\varphi_0(t_i)}\right)^{\alpha_1}
\dotsm \left(\frac{\varphi_n(t_i)}{\varphi_0(t_i)}\right)^{\alpha_n}.
\]

By the singular Lagrange multiplier rule \cite[Theorem 6.1.1, Remark 6.1.2(iv)]{Cla90} applied on $U=\mathbb R\setminus Z$,
there exist multipliers $\lambda_\alpha$ with $|\alpha|\le 2s$ such that the Lagrangian
\[
\mathcal L(t_i,\omega_i,\lambda_\alpha) =
{\lambda_0}\sum_{i=1}^N h(t_i) +
\sum_{|\alpha|\le 2s} \lambda_\alpha \bigl(P_\alpha(\omega_i,t_i)-m_\alpha\bigr)
\]
is stationary at any local minimizer. Differentiating gives the conditions
\begin{align*}
\frac{\partial \mathcal L}{\partial \lambda_\alpha} &= P_\alpha(\omega_i,t_i)-m_\alpha = 0, \\
\frac{\partial \mathcal L}{\partial \omega_i} &= \sum_{|\alpha|\le 2s} \lambda_\alpha \,\varphi_1(t_i)^{\alpha_1}\dotsm \varphi_n(t_i)^{\alpha_n} = 0, \\
\frac{\partial \mathcal L}{\partial t_i} &= {\lambda_0} h'(t_i) + 
\omega_i \sum_{|\alpha|\le 2s} \lambda_\alpha \sum_{k=1}^n 
\alpha_k \varphi_1(t_i)^{\alpha_1}\dotsm \varphi_k(t_i)^{\alpha_k-1}\varphi_k'(t_i)
\dotsm \varphi_n(t_i)^{\alpha_n} = 0.
\end{align*}

Let
\[
H(t) = \sum_{|\alpha|\le 2s} \lambda_\alpha\,\varphi_1(t)^{\alpha_1}\dotsm \varphi_n(t)^{\alpha_n}.
\]
Then the second and third equations become
\[
H(t_i)=0 \qquad\text{and}\qquad H'(t_i)=-\frac{{\lambda_0}}{\omega_i}h'(t_i).
\]

{As in the proof of \Cref{thm:quadraturerational}, we have two cases: $\lambda_0=0$ and $\lambda_0\neq 0$. In the first case, this implies that $H(t_i)=H'(t_i)=0$ for all $i=1,\dots,N$}. So each $t_i$ is a multiplicity-two root, thus there are at most $\left\lfloor\frac{\deg H}{2} \right\rfloor=sD$ nodes. Otherwhise, we may assume $\lambda_0=1$.

In order to estimate the number of nodes we observe that 
on each interval $(z_i,z_{i+1})$, the function $h$ is smooth and strictly convex, with a unique minimizer $y_i\in(z_i,z_{i+1})$.
Let
\[
T_i = \{t_j \in (z_i,z_{i+1}) \mid H(t_j)=0\}
\]
be the set of roots of $H$ in that interval. Since $\deg H \le 2sD$, we have
\[
\sum_{i=0}^p \#T_i \;\le\; 2sD.
\]

A node arises whenever $H'(t_j)$ has the correct sign relative to $h'(t_j)$:
points in $(z_i,y_i)$ are counted if $H'(t_j)\ge 0$, those in $(y_i,z_{i+1})$ if $H'(t_j)\le 0$.
As in the proof of \Cref{thm:quadraturerational}, the maximal number of nodes from $T_i$ is
\[
\begin{cases}
\#T_i/2 + 1 & \text{if $\#T_i$ is even},\\[2pt]
\lceil \#T_i/2\rceil & \text{if $\#T_i$ is odd}.
\end{cases}
\]

This is maximized when all $\#T_i$ are even, giving
\[
N \;\le\; \sum_{i=0}^p \left(\frac{\#T_i}{2}+1\right)
\;\le\; \frac{2sD}{2} + p + 1
\;=\; sD + p + 1.
\]\qedhere
\end{proof}

\begin{remark}
    Notice, as in Remark \ref{rmk:optimaln>3}, if $n\geq 3$, $\varphi_0$ is constant and $\varphi_i$ are generic for $i\neq 0$, then the bound obtained is optimal.
\end{remark}

\begin{prop}\label{prop:ttd}
     Let $\mathcal C\subset \R^2$ be the curve defined by $y=x^d$. {Suppose $Q(\mu)_s$ has a quadrature rule on $\mathcal C$ such that all nodes $(x_i,x_i^d)$ are such that $x_i\geq 0$}. Then for $s\geq d$, $Q(\mu)_s$ has a quadrature rule on $\mathcal C$ with at most $\left\lceil \frac{ds-1}{2}-\frac{d(d-3)}{4} \right\rceil+1$ nodes.
 \end{prop}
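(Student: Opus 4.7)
The plan is to adapt the Lagrangian optimization of Theorem~\ref{thm:quadraturerational} to the polynomial parametrization $\varphi(t)=(t,t^{d})$ of $\mathcal{C}$ (so $\varphi_0\equiv 1$, $p=0$, $D=d$), while sharpening the zero count by exploiting the non-surjectivity of the pullback map in the plane case (Remark~\ref{rmk:n=2 notsurjective}).

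First I would set up
\[
\min \sum_{i=1}^{N} t_i^{2}\qquad \text{subject to}\qquad \sum_{i=1}^{N}\omega_i\,\varphi(t_i)^{\alpha}=m_{\alpha}\quad(1\le|\alpha|\le s),\ \omega_i\ge 0,
\]
deliberately omitting the zeroth moment so that one additional node can be placed at the origin to correct the total mass, as in the proof of Theorem~\ref{prop:planecurve}. Lagrange multipliers then produce a polynomial $H(t)=\sum_{1\le a+b\le s}\lambda_{a,b}\,t^{a+bd}$ satisfying $H(t_i)=0$ and $H'(t_i)=-2t_i/\omega_i$ at every node, hence $H'$ is strictly negative at positive nodes and strictly positive at negative nodes. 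This sets up the sign-alternation bookkeeping of Theorem~\ref{thm:quadraturerational} with $(-\infty,0)$ and $(0,\infty)$ playing the roles of the two subintervals around the unique critical point $y_0=0$ of $h(t)=t^2$.

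The essential new ingredient is a sharper dimension count. For $s\ge d$, organizing the accessible exponents $a+bd$ by residue class modulo $d$ shows that each residue $r\in\{0,\ldots,d-1\}$ contributes exactly $s-r+1$ distinct values, so the pullback image $V_s$ has dimension
\[
k\;=\;\sum_{r=0}^{d-1}(s-r+1)\;=\;ds-\frac{d(d-3)}{2},
\]
and $H$ (missing its constant term) lies in $V_s^\circ$ of dimension $k-1$. The $sd-(k-1)=(d-1)(d-2)/2$ "missing" degrees between $k-1$ and $sd$ correspond to the delta invariant of the singularity of the projective closure of $\mathcal{C}$ at $(0{:}1{:}0)$: on the normalization $\mathbb{P}^1\to\mathbb{P}^2$ of $\mathcal{C}$, the pullback of a degree-$s$ plane curve is a binary form of degree $sd$, of which $(d-1)(d-2)/2$ zeros are absorbed at the unique preimage of the singular point. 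This forces $H$ to have at most $k-1$ distinct real zeros, and plugging this into the sign-alternation count of Theorem~\ref{thm:quadraturerational} yields at most $\lfloor k/2\rfloor$ Lagrangian nodes. Adding the extra node for the zeroth-moment correction gives the claimed bound
\[
N\;\le\;\lfloor k/2\rfloor+1\;=\;\left\lceil\frac{ds-1}{2}-\frac{d(d-3)}{4}\right\rceil+1.
\]

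The delicate step is the bound on distinct real zeros of $H$: the naive estimate $\deg H\le sd$ is too loose by exactly $(d-1)(d-2)/2$, and one must argue that, uniformly in the optimizer, the pullback of the Lagrangian plane curve along the normalization of $\mathcal{C}$ is forced to absorb this many zeros at the preimage of the singular point at infinity. Making this intersection-multiplicity argument precise—rather than merely asymptotic—is the main technical obstacle.
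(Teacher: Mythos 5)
Your setup matches the paper's: the parametrization $(t,t^d)$, the objective $\sum_i t_i^2$, the Lagrangian polynomial $H(t)=\sum\lambda_{a,b}t^{a+bd}$ with $H(t_i)=0$ and $H'(t_i)=-2t_i/\omega_i$, and the dimension count $k=ds-\tfrac{d(d-3)}{2}$ for the span of the accessible exponents (your residue-class computation is equivalent to the paper's kernel argument). But the step you yourself flag as delicate is a genuine gap, and the proposed repair does not work. You need $H$ to have at most $k-1$ distinct real zeros on \emph{all} of $\R$; Descartes' rule only bounds positive and negative zeros separately, each by (number of monomials minus one), so a priori $H$ can have on the order of $2k$ real zeros, and with that count your sign-alternation bookkeeping yields about $k$ nodes rather than $\lfloor k/2\rfloor$. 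Worse, the claim is simply false: for $d=3$, $s=3$ one has $k=9$, the exponent set contains $\{1,3,5,7,9\}$, and $H(t)=t\prod_{j=1}^{4}(t^2-j^2)$ is an admissible element of your span with $9=k$ distinct real zeros. The delta-invariant argument cannot rescue this: the $(d-1)(d-2)/2$ missing exponents are gaps strictly below $sd$ in the set $\{a+bd:\ a+b\le s\}$, while $t^{sd}$ itself is attained as the pullback of $y^s$, so nothing forces the degree-$sd$ binary form to vanish at the preimage $t=\infty$ of the singular point $(0{:}1{:}0)$ --- a generic degree-$s$ form does not even pass through that point, and no uniform intersection multiplicity is imposed on the Lagrangian curve.

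The paper sidesteps the issue entirely by restricting the nodes to $t_i\ge 0$ and counting only the \emph{positive} roots of $H$, where Descartes' rule legitimately gives at most $k-1$ roots, of which at most half can satisfy the sign condition $H'(t_i)\le 0$; the final $+1$ there is the boundary point $t=0$ of $[0,\infty)$, not a mass-correction node (the paper keeps the zeroth moment among the constraints). If you want to work over the whole real line, you must either prove a global real-root bound for polynomials supported on the exponent set --- which, as the example shows, cannot be $k-1$ --- or change the mechanism; as written the argument does not close.
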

 \begin{proof}

The curve $\mathcal C$ is parametrized by $(t,t^d)$.  
Therefore, a quadrature rule supported on $\mathcal C$ with nodes $(t_i,t_i^d)$ satisfies
\[
\sum_{i=1}^N \omega_i x_i^a y_i^b = \sum_{i=1}^N \omega_i t_i^{a+bd},
\qquad (a+b \le s).
\]
We formulate the following optimization problem for the nodes $t_i \ge 0$ and weights $\omega_i > 0$:
\[
\begin{cases}
\min \sum_{i=1}^N t_i^2,\\[2pt]
\sum_{i=1}^N \omega_i t_i^{a+bd} = m_{a,b}, \qquad a+b \le s.
\end{cases}
\]
Here we note that  existence of such a rule follows from the hypothesis.

Again, we observe the Lagrange conditions: 
The Lagrangian is  
\[
\mathcal L(t_i,\omega_i,\lambda_{a,b})
= \sum_{i=1}^N t_i^2
+ \sum_{a+b \le s} \lambda_{a,b}\Bigl(\sum_i \omega_i t_i^{a+bd} - m_{a,b}\Bigr).
\]
Stationarity yields
\begin{align*}
0=&\frac{\partial \mathcal L}{\partial \lambda_{a,b}} = \sum_i \omega_i t_i^{a+bd}-m_{a,b},\\
0=&\frac{\partial \mathcal L}{\partial \omega_i}\phantom{a} = \sum_{a+b\le s} \lambda_{a,b}t_i^{a+bd},\\
0=&\frac{\partial \mathcal L}{\partial t_i}\phantom{a} = \omega_i\sum_{a+b\le s}(a+bd)\lambda_{a,b}t_i^{a+bd-1}+2t_i.
\end{align*}

Let  
\[
H(t) = \sum_{a+b\le s} \lambda_{a,b}\, t^{a+bd}.
\]
Then the second condition gives $H(t_i)=0$ and the third becomes
\[
H'(t_i) = -\frac{2}{\omega_i} t_i \le 0.
\]
Thus, among the positive roots of $H$, only those with non-positive derivative are candidates for local minima; at most half of the roots can satisfy this sign condition.

We again aim to bound the number of roots via dimension arguments:  Observe that the parametrization induces a linear map
\[
\psi:\R[x,y]_{\le s} \longrightarrow \R[t]_{\le sd},
\qquad p(x,y)\longmapsto p(t,t^d).
\]
Its kernel is the principal ideal $(y-x^d)$, so we have a factorization  
\[
\overline{\psi}:(\R[x,y]/(y-x^d))_{\le s} \hookrightarrow \R[t]_{\le sd}
\]
whose image is spanned by the monomials $t^{a+bd}$ with $a+b \le s$.  

The dimension formula
\[
\dim \R[x,y]_{\le s} = {s+2\choose 2}
\]
and
\[
\dim (y-x^d)\cdot \R[x,y]_{\le s-d} = {s-d+2\choose 2}
\]
gives
\[
\dim \mathrm{im}(\psi)
= {s+2\choose 2}-{s-d+2\choose 2}
= ds - \frac{d(d-3)}{2}.
\]
Hence, any $H(t)$ constructed as above has at most  
\[
ds - \frac{d(d-3)}{2} - 1
\]
positive roots by Descartes' rule of signs.
       
Among these roots, at most half satisfy $H'(t_i)\le 0$.  
Thus, the number of local minima is at most
\[
\left\lceil \frac{ds - \frac{d(d-3)}{2} - 1}{2}\right\rceil
= \left\lceil \frac{ds-1}{2}-\frac{d(d-3)}{4}\right\rceil.
\]
Including the boundary point $t=0$ from the closed interval $[0,\infty)$ adds one more node, giving
\[
N \;\le\; \left\lceil \frac{ds-1}{2}-\frac{d(d-3)}{4} \right\rceil + 1.    \]

 \end{proof}
{
\begin{remark}
 Our proof and the connection to Hilbert series of ideals relates to the work of di Dio and Kummer \cite{diDioKummer2021}. Note however, that their results only consider the Carathéodory number of the moment cone, i.e., their results do not control possible points at {infinity} and therefore does not yield actual quadrature rules.     
\end{remark}
 }
\begin{remark}
Zalar~\cite{zalar} gives an upper bound of 
\[
N \leq d s - \left\lceil \frac{d}{2} \right\rceil
\]
for quadrature rules $Q(\mu)_{2s-1}$ supported on curves of the form $y = q(x)$ with $\deg q = d$. 
Our bounds in Theorems~\ref{thm:quadraturerational} and~\ref{thm:quadraturerationaleven} for arbitrary rational curves in $\R^n$ are, for such curves in $\R^2$, larger by at most one node. 

However, for specific curves like $y = x^d$, when the existence of a quadrature rule with positive $x$'s is guaranteed, the bound in Proposition~\ref{prop:ttd} becomes asymptotically sharper as $d \to \infty$. For example:
\begin{table}[h!]
$$
\begin{array}{c|c|c|c}
(2s-1,d) & \text{Zalar~\cite{zalar}} & \text{Theorem~\ref{thm:quadraturerational}} & \text{Proposition~\ref{prop:ttd}} \\ \hline
(3,3) & 4 & 5&  5 \\
(9,9) & 40 & 41& 28
\end{array}
$$
\caption{Upper bound on nodes for a quadrature rule of strength $2s-1$ on $y=x^d$.}
\end{table}

This shows that while Zalar's bound is slightly better for small degrees, Proposition~\ref{prop:ttd} yields significantly improved bounds for large $d$.
\end{remark}

\paragraph{Conclusion and Open Questions.}
In this article, we derived explicit bounds for quadrature rules supported on algebraic and, in particular, rational curves, allowing singularities, multiple places at infinity, and higher-dimensional embeddings. Our approach combined optimization methods with dimension arguments and root-counting techniques, yielding improved estimates over classical Carathéodory-type bounds and recovering Gaussian type optimality in the univariate case.  

Several natural questions remain open:  Firstly, while our estimates are tight in some cases (e.g., lines or certain low-degree curves), it is unclear whether they are optimal for all degrees and strengths. In particular, it seems plausible that the additional counting for places at infinity can be significantly reduced, possibly to a constant independent of $t$, or even to a single point.  Secondly, it would seem hopeful that extending the present approach to algebraic surfaces or general real varieties of dimension $\ge 2$ may reveal new phenomena. This will likely require refined tools from real algebraic geometry and moment theory to control both local and global contributions to node counts. Thirdly, it seems natural to ask if the set of quadrature rules of a given strength on a fixed curve $\mathcal{C}$ can be represented similarly to the Gaussian case, as was studied in \cite{AHL}.  

Finally, our bounds depend only on the degree and parametrization data. Incorporating finer invariants, such as singularity structure, genus, or real topology, might lead to sharper or even exact formulas for minimal node counts in broad generality.

These questions point toward an interesting  interplay between real algebraic geometry, optimization, and numerical analysis in the study of quadrature rules on algebraic sets.

\paragraph{Acknowledgements.}
The authors thank Greg Blekherman and Markus Schweighofer for insightful discussions and valuable suggestions. We are particularly grateful to Aljaž Zalar for pointing out an error in a previous version of Proposition~\ref{prop:ttd}.  We also thank the referees for a careful reading and for their useful suggestions. Financial support from the Tromsø Research Foundation through grant 17MATCR is gratefully acknowledged.

\printbibliography

@article{RS,
title = {Optimization approaches to quadrature: New characterizations of Gaussian quadrature on the line and quadrature with few nodes on plane algebraic curves, on the plane and in higher dimensions},
journal = {Journal of Complexity},
volume = {45},
pages = {22--54},
year = {2018},
author = {C. Riener and M. Schweighofer}
}

@article{CurtoFialkow1991,
  author = {Curto, R. E. and Fialkow, L. A.},
  title = {Recursiveness, positivity, and truncated moment problems},
  journal = {Houston Journal of Mathematics},
  volume = {17},
  number = {4},
  pages = {603--635},
  year = {1991}
}

@book{CurtoFialkow1996,
  author = {Curto, R. E. and Fialkow, L. A.},
  title = {Solution of the Truncated Complex Moment Problem for Flat Data},
  publisher = {Memoirs of the American Mathematical Society},
  series = {Memoirs of the AMS},
  number = {568},
  year = {1996}
}

@article{CurtoFialkow2008,
  author = {Curto, R. E. and Fialkow, L. A.},
  title = {Truncated \(K\)-moment problems in several variables},
  journal = {Journal of Operator Theory},
  volume = {54},
    number={1},
  pages = {189-226},
  year = {2005}
}

@article{richter1957parameterfreie,
  title={Parameterfreie {A}bsch\"atzung und {R}ealisierung von
              {E}rwartungswerten},
  author={Richter, H.},
  journal={Bl\"atter der Deutschen Gesellschaft f\"ur
              Versicherungsmathematik},
  volume={3},
  number={2},
  pages={147--162},
  year={1957},
  publisher={Springer}
}

@article{tchakaloff1957formules,
  title={Formules de cubatures m{\'e}caniques {\`a} coefficients non n{\'e}gatifs},
  author={Tchakaloff, V.},
  journal={Bulletin des Sciences Math\'ematiques. 2e S\'erie},
  volume={81},
  number={2},
  pages={123--134},
  year={1957}
}

@book{gauss1815methodvs,
  title={Methodvs nova integralivm valores per approximationem inveniendi},
  author={Gauss, C. F.},
  year={1815},
  publisher={Dieterich}
}

@article{zalar,
author = {A. Zalar},
title = {The truncated moment problem on curves $y=q(x)$ and $yx^\ell=1$},
journal = {Linear and Multilinear Algebra},
volume = {72},
number = {12},
pages = {1--45},
year  = {2023},
publisher = {Taylor & Francis},
}

@book{Cla90,
    author = {F. H. Clarke},
    title = {Optimization and nonsmooth analysis},
    publisher =  {Society for Industrial and Applied Mathematics (SIAM)},
    year = {1990},
    edition={second edition},
    series={ Classics in Applied Mathematics 5}
}

@article{BaldiBlekhermanSinn2024,
      title={Nonnegative Polynomials and Moment Problems on Algebraic Curves}, 
      author={L. Baldi and G. Blekherman and R. Sinn},
      year={2024},
      journal={arXiv:2407.06017},
}

@article{diDioKummer2021,
  author = {di Dio, P. and Kummer, M.},
  title = {The multidimensional truncated moment problem: Carathéodory numbers from Hilbert functions},
  journal = {Mathematische Annalen},
  volume = {380},
  pages = {267–291},
  year = {2021}
}

@article{diDioSchmudgen2018,
title = {The multidimensional truncated moment problem: Carathéodory numbers},
journal = {Journal of Mathematical Analysis and Applications},
volume = {461},
number = {2},
pages = {1606-1638},
year = {2018},
author = {P. J. {di Dio} and K. Schmüdgen}}

@article{Gustafsson2023,
  author = {Gustafsson, B.},
  title = {Quadrature for quadrics},
  journal = {European Journal of Mathematics},
  volume = {9},
  pages = {15–42},
  year = {2023}
}

@article{Lasserre2011,
  author = {Lasserre, J. B.},
  title = {A new look at nonnegativity on closed sets and polynomial optimization},
  journal = {SIAM Journal on Optimization},
  volume = {21},
  pages = {864–885},
  year = {2011}
}

@article{Lasserre2012,
  author = {Lasserre, J. B.},
  title = {The existence of Gaussian cubature formulas},
journal = {Journal of Approximation Theory},
volume = {164},
number = {5},
pages = {572-585},
year = {2012}
}

@article{SommarivaVianello2020,
  author = {Sommariva, A. and Vianello, M.},
 title = {Near-algebraic Tchakaloff-like quadrature on spherical triangles},
journal = {Applied Mathematics Letters},
volume = {120},
pages = {107282},
year = {2021},
}

@book{hac,
  author    = {Hackbusch, W.},
  title     = {The concept of stability in numerical mathematics},
    series={ Springer Series in Computational Mathematics},
  publisher = {Springer},
  year      = {2014}
}

@book{co1,
  author    = {Davis, P. J. and Rabinowitz, P.},
  title     = {Methods of Numerical Integration},
  publisher = {Academic Press},
  year      = {1984}
}

@article{kummer2025positivepolynomialstruncatedmoment,
      title={Positive polynomials and the truncated moment problem on plane cubics}, 
      author={M. Kummer and A. Zalar},
      year={2025},
      journal={arXiv:2508.13850}}

@article{AHL,
     author = {Blekherman, G. and Kummer, M. and Riener, C. and Schweighofer, M. and Vinzant, C.},
     title = {Generalized eigenvalue methods for {Gaussian} quadrature rules},
     journal = {Annales Henri Lebesgue},
     pages = {1327--1341},
     publisher = {\'ENS Rennes},
     volume = {3},
     year = {2020}
}

@article{Cools_1997, title={Constructing cubature formulae: the science behind the art}, volume={6}, journal={Acta Numerica}, author={Cools, R.}, year={1997}, pages={1–54}}

@article{COOLS1993309,
title = {Monomial cubature rules since “Stroud”: a compilation},
journal = {Journal of Computational and Applied Mathematics},
volume = {48},
number = {3},
pages = {309-326},
year = {1993},
author = {R. Cools and P. Rabinowitz},
}

@article{COOLS199921,
title = {Monomial cubature rules since “Stroud”: a compilation — part 2},
journal = {Journal of Computational and Applied Mathematics},
volume = {112},
number = {1},
pages = {21-27},
year = {1999},
author = {R. Cools},
}

@article{bondarenko2013optimal,
  title={Optimal asymptotic bounds for spherical designs},
  author={Bondarenko, A. and Radchenko, D. and Viazovska, M.},
  journal={Annals of mathematics},
  pages={443--452},
  year={2013},
  publisher={JSTOR}
}

\end{document}